\newtheorem{theorem}{Theorem}[section]
\newtheorem*{theorem*}{Main Theorem}
\newtheorem{lemma}[theorem]{Lemma}
\newtheorem{question}{Open Question}
\theoremstyle{definition}
\numberwithin{equation}{section}
\theoremstyle{definition}
\author{Lennard Bakker}
\address{
Mathematics Department\\
Brigham Young University\\
Provo, UT, 84602
}
\email{bakker@math.byu.edu}
\author{Skyler Simmons}
\address{
Mathematics Department\\
Brigham Young University\\
Provo, UT, 84602
}
\email{xinkaisen@gmail.com}
\keywords{Need to fill this in}
\subjclass{Primary ABCDE, Secondary 12345, 67890}
\begin{document}

\title[A Separating Surface]{A Separating Surface for Sitnikov-like $n+1$-body Problems}

\maketitle

\begin{abstract}
We consider the restricted $n+1$-body problem of Newtonian mechanics.  For periodic, planar configurations of $n$ bodies which is symmetric under rotation by a fixed angle, the $z$-axis is invariant.  We consider the effect of placing a massless particle on the $z$-axis.  The study of the motion of this particle can then be modeled as a time-dependent Hamiltonian System.  We give a geometric construction of a surface in the three-dimensional phase space separating orbits for which the massless particle escapes to infinity from those for which it does not.  The construction is demonstrated numerically in a few examples.
\end{abstract}

\section{Introduction}

In 1960, Sitnikov \cite{bibSitnikov1} demonstrated the existence of a restricted three-body problem which exhibits a remarkable chaotic behavior.  This orbit consists of a periodic Kepler two-body orbit in the $xy$ plane with equal masses and a third, massless particle that runs along the $z$ axis, simultaneously remaining not bounded and not tending to infinity (oscillatory motion).  The behavior is eventually extended to the case where the $z$-axis body has small finite mass.  This orbit, and variations of it, have been the subject of much study since that time. \\

Analytic solutions of this problem date back to before Sitnikov's time.  In 1913, MacMillan \cite{bibMacMillan1} gave the explicit solution in the case where the Kepler orbit is circular in terms of elliptic functions.  Finding analytic and numerical methods for computing solutions away from the circular case is still an active area of research (see \cite{bibHagel1}, \cite{bibFaruque1}, and \cite{bibHagelLhotka1}, for example).  Additionally, numerical studies of the non-circular case have given some ideas of qualitative behavior of the orbit, including the nature of bifurcations as the eccentricity parameter varies.  (See \cite{bibJimenezEscalona1}). \\

In 2008 (with further work in 2010) the existence of an infinite family of non-trivial periodic orbits of the Sitnikov problem was demonstrated by Llibre and Ortega in \cite{bibLlibreOrtega1} and Ortega and Rivera in \cite{bibOrtegaRivera1}.  It is demonstrated that there are orbits where $p$ crossings of the massless particle through the origin occur for every $N$ periods of the planar orbit, for any natural numbers $p$ and $N$.  Moreover, it is shown that these periodic orbits exist for any eccentricity in $[0,1)$ for the planar masses.  Similar work was done independently by Marchesin and Castihlo in \cite{bibMarchesinCastilho1}.  Existence results were extended to a generalized Sitnikov problem, which involves more than two masses in a planar configuration whose orbits are ellispses, by Rivera in 2013 (see \cite{bibRivera1}).  Interestingly, the results of Rivera's work included an upper bound of 234 masses in the planar configuration needed for the results to hold. \\

Marchesin and Vidal consider a further departure from the general Sitnikov setting in \cite{bibMarchesinVidal1}.  In their 2013 paper, they consider a rotating central configuration of four masses in two equal-mass pairs which geometrically form a rhombus.  Under a certain transformation of coordinates, the masses become fixed points, as in the classical study of the restricted three-body problem.  They are able to then derive many of the classical results known for the circular Sitnikov problem, including the non-existence of non-periodic oscillating motion.  Additionally, they perform a stability analysis of the so-called ``horizontal motion'', wherein the massless particle is allowed to drift off of the $z$-axis.  This is similar to the numerical stability analysis performed by Sidorenko in \cite{bibSidorenko1} for the circular Sitnikov problem.\\

Some other works, which we will not summarize here, but which lend further evidence to the volume of study given in this topic, include works by Alekseev (\cite{Alekseev1}, \cite{Alekseev2}, \cite{Alekseev3}), Moser (\cite{Moser1}), Liu and Sun (\cite {bibLiuSun1}), Perdios (\cite{bibPerdios1}), Perdios and Markellos (\cite{bibPerdiosMarkellos}), and Soulis, Papadakis, and Bountis (\cite{bibSoulisPapadakisBountis}). \\

The work presented in this paper originally arose out of a study of the rhomboidal four-body problem, in which two bodies of mass $m_1$ lie at $(\pm x, 0)$ for all time, and another pair of mass $m_2$ lie at $(0, \pm y)$.  The periodic version of these orbits feature alternating horizontal and vertical collisions at the origin.  Some relevant recent papers on this orbit include \cite{bibWaldvogel1}, \cite{bibYan1}, and \cite{bibBS1}.  Additionally, \cite{bibMartinez} and \cite{bibShib1} include the rhomboidal four-body orbit as part of a larger class of periodic collision-based orbits.  The $m_2 \to 0$ limiting case of this orbit is equivalent to the eccentricity-one version of the Sitnikov problem, featuring binary collision of the two non-zero masses at the origin.  Since the zero-mass particles are symmetric for all time and exert no gravitational pull on each other, one may be ignored, giving the familiar Sitnikov setting.  The results derived from research in this area readily generalized to the setting presented in this paper. \\

The main portion of the paper will be devoted to the proof of the following theorem:

\begin{theorem*}
There exists a four-branched, two-dimensional topological manifold $\mathcal{S}$ that separates Sitnikov-like $n+1$-body escape orbits from non-escape orbits.  Moreover, each branch of $\mathcal{S}$ is either forward- or backward-invariant.
\end{theorem*}

The remainder of the paper will be as follows: In Section \ref{notation}, we establish the notation that will be used throughout the paper, as well as give the differential equations pertaining to the orbits which we are considering.  Section \ref{helpfulTheorem} contains a theorem from topology that is a key ingredient to the proof of the Main Theorem.  Section \ref{ProofMain} contains the proof of the Main Theorem, broken into three parts.  In Section \ref{proofpart1}, we build up some helpful results for the proof.  Section \ref{proofpart2} constitutes the bulk of the proof, and contains the majority of the construction of $\mathcal{S}$.  Lastly, Section \ref{finishMain} completes the construction and gives some observations about $\mathcal{S}$.  Section \ref{numerics} focuses on numerical results pertaining to the Main Theorem.  Section \ref{preNumerics} gives a few more calculations that can be used to accelerate the pace of the numerical work.  Sections \ref{example1} through \ref{example3} then give the results for various planar configurations.  Lastly, Section \ref{concluding} lists some open questions and gives some concluding remarks. \\

\section{Notation} \label{notation}

Consider any $T$-periodic planar configuration of $n$ bodies whose coordinates are given by $(x_1, y_1)$, ..., $(x_n, y_n)$, and whose masses are given by $m_1$, ..., $m_n$.  We will require that the configuration maintains a rotational symmetry throughout in the following sense: there is a fixed angle $\alpha$ which evenly divides $2\pi$ such that rotation of the plane through the angle $\alpha$ at any time yields the same physical setting (up to re-labeling of the bodies).  For our purposes, no further restrictions need be placed on the planar bodies.  In fact, no difficulty arises if planar orbits featuring regularized collisions are considered.  Under the rotational symmetry condition, the acceleration of a massless particle on the $z$-axis will be in a direction parallel to the $z$-axis.  Moreover, if the initial velocity of the particle on the $z$ axis is also parallel to the $z$-axis, then the particle will remain on the $z$-axis for all time.  \\

As the particular configuration of the planar masses will not be of much importance, we simplify notation slightly by setting
$$r_i = \sqrt{x_i^2 + y_i^2}$$
and consider only the distances from the origin of the bodies in the plane.  Setting $q$ to be the position of the massless particle on the $z$-axis with $p$ its velocity, we have the following equations of motion:
\begin{align}
\label{flow21}\dot{q} &= p, \\
\label{flow22}\dot{p} &= -\sum_{i=1}^n \frac{m_iq}{(r_i^2(t) + q^2)^{3/2}},
\end{align}
where the dot represents the derivative with respect to time.  It is important to remember that each of the $r_i$ is time-dependent and $T$-periodic.  At many points in our analysis, it will help to consider the time-independent system:
\begin{align}
\label{flow1} \dot{q} &= p, \\ 
\dot{p} &= -\sum_{i=1}^n \frac{m_iq}{\left(r_i^2(\theta) + q^2\right)^{3/2}}, \\
\label{flow3} \dot{\theta} &= 1
\end{align}
and consider the behavior on $\mathbb{R} \times \mathbb{R} \times [0,T]$ with the $\theta = 0$ and $\theta = T$ planes identified.  The flow given by equations \ref{flow1} - \ref{flow3} will be denoted $\phi_t$, and points in this space will be given by ordered triples $(p,q,\theta)$.  Note that changing the sign on both $q$ and $p$ also changes the sign of $\dot{q}$ and $\dot{p}$.  Hence, understanding only half of the phase space is necessary to categorize the complete behavior of $\phi_t$.  For simplicity, we will consider the $q > 0$ region.\\

As the behavior near $q = \infty$ will be especially important for our analysis, we also define new variables $Q$ and $P$ by 
\begin{align*}
Q &= q^{-1/2}, \\
P &= p.
\end{align*}
Note that under this change of variables, $Q = 0$ corresponds to $q = \infty$.  In this setting, we have
\begin{align}
\label{FLOW1} \dot{Q} &= -\frac{1}{2}Q^3P,\\
\dot{P} &= -\sum_{i = 1}^n \frac{m_iQ^4}{\left(r_i^2(\theta)Q^4 + 1\right)^{3/2}}, \\
\label{FLOW3} \dot{\theta} &= 1.
\end{align}
We will use $\Phi_t$ to denote the flow on $[0, \infty) \times \mathbb{R} \times [0,T]$ as defined by \ref{FLOW1} - \ref{FLOW3}, where the $\theta = 0$ and $\theta = T$ planes are again identified.  Points in this coordinate setting will again be given by ordered triples $(Q,P,\theta)$. \\

Note that any point $(Q, P, \theta)$ with $Q = 0$ has a $T$-periodic orbit under $\Phi_t$.  These orbits correspond to orbits where the massless particle has escaped to infinity, de-coupling the system.  Physically, the value of $P$ is the velocity with which escape has occurred.  In this setting, the new one-body system is not acted on by external force, and continues moving at its initial velocity in accordance with Newton's first law, while the planar configuration continues its periodic motion forever.  If escape occurs with positive velocity, it is said to be \textit{hyperbolic}.  If escape occurs with zero velocity, it is said to be \textit{parabolic}\\

\section{A Helpful Theorem}\label{helpfulTheorem}

One tool that will be needed in our proof of the Main Theorem, but which is not directly related to the dynamics of the system, is presented below.  It may be thought of as a topological version of the Closed Graph Theorem.  However, to avoid confusion, we will refrain from referring to it as such.  (This is presented as a problem in \ref{bibMunkres}, p. 171.)
\begin{theorem}
Let $f: X \to Y$, where $Y$ is a compact Hausdorff space.  Then $f$ is continuous if and only if the graph $\Gamma_f$ of $f$, defined as
$$\Gamma_f = \{(x, f(x)) \ : \ x \in X\}$$
is closed in $X \times Y$.
\label{closedGraphTheorem}
\end{theorem}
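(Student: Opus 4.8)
The plan is to prove the two implications separately, since they have quite different characters. For the forward direction I would show directly that if $f$ is continuous then the complement of $\Gamma_f$ is open, a fact that uses only that $Y$ is Hausdorff. Given a point $(x_0, y_0) \notin \Gamma_f$, we have $y_0 \neq f(x_0)$, so the Hausdorff property supplies disjoint open sets $U \ni f(x_0)$ and $V \ni y_0$. Then $f^{-1}(U) \times V$ is an open neighborhood of $(x_0, y_0)$ that misses $\Gamma_f$, because every $x \in f^{-1}(U)$ has $f(x) \in U$, and $U$ is disjoint from $V$. (Equivalently one may write $\Gamma_f = g^{-1}(\Delta)$, where $g(x,y) = (f(x), y)$ is continuous and $\Delta \subseteq Y \times Y$ is the closed diagonal.)

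The substantive direction is the converse, and here the compactness of $Y$ is essential. Assuming $\Gamma_f$ is closed, I would prove continuity of $f$ by showing that $f^{-1}(C)$ is closed in $X$ for every closed $C \subseteq Y$. The key observation is the identity
$$ f^{-1}(C) = \pi_X\bigl( \Gamma_f \cap (X \times C) \bigr), $$
where $\pi_X: X \times Y \to X$ denotes the projection: a point $x$ lies in $f^{-1}(C)$ exactly when $(x, f(x))$ lies in $X \times C$. The set $\Gamma_f \cap (X \times C)$ is closed, being the intersection of two closed sets, so it remains only to know that $\pi_X$ carries closed sets to closed sets.

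The main obstacle, then, is precisely this closed-projection theorem: when $Y$ is compact, $\pi_X : X \times Y \to X$ is a closed map. I would establish it via the tube lemma. Given a closed set $A \subseteq X \times Y$ and a point $x_0 \notin \pi_X(A)$, the slice $\{x_0\} \times Y$ lies in the open set $(X \times Y) \setminus A$; covering this slice by basic open boxes contained in the complement of $A$ and using compactness of $Y$ to extract a finite subcover, I obtain an open set $W \ni x_0$ in $X$ with $W \times Y \subseteq (X \times Y) \setminus A$. Thus $W$ is a neighborhood of $x_0$ disjoint from $\pi_X(A)$, which shows $\pi_X(A)$ is closed and completes the argument. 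It is worth emphasizing that compactness is invoked only in this converse direction, while the forward direction holds for any Hausdorff target $Y$.
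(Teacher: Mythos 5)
Your proof is correct, and in the converse direction it is essentially the argument the paper gives: both intersect the closed graph with a closed cylinder $X \times C$ and project to $X$, invoking compactness of $Y$ to conclude that the projection is closed. You go further by actually proving the closed-projection fact via the tube lemma, which the paper simply asserts, and you phrase the conclusion in terms of preimages of arbitrary closed sets where the paper works pointwise with $C = Y - V$ for a neighborhood $V$ of $f(x_0)$; these are cosmetic differences. The forward direction is where you genuinely diverge, and to your advantage: the paper argues with sequences ($x_n \to x$ implies $(x_n, f(x_n)) \to (x, f(x))$), which establishes only sequential closedness and therefore suffices only when $X \times Y$ is first countable (or at least sequential); it also never visibly uses the Hausdorff hypothesis, which is what makes limits unique. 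Your argument --- separating $y_0$ from $f(x_0)$ by disjoint open sets $U$ and $V$ and producing the basic open box $f^{-1}(U) \times V$ inside the complement of the graph --- is valid for arbitrary topological spaces $X$ and correctly isolates Hausdorffness of $Y$ as the ingredient for this direction. In short, your write-up follows the same skeleton but is strictly more careful than the paper's in the forward implication and more explicit about the compactness step in the converse.
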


We provide a proof for completeness.

\begin{proof}
Suppose that $f$ is continuous, and let $\{x_n\} \to x$ in $X$.  Then $f(x_n)$ converges to $f(x)$, and so the sequence of points $(x_n, f(x_n))$ converges to $(x, f(x))$.  Since any sequence of points in $\Gamma_f$ corresponds (via projection) to a sequence of points in $x_n$, we get that $\Gamma_f$ is closed. \\

On the other hand, suppose that $\Gamma_f$ is closed.  Let $V$ be any open neighborhood of $f(x_0)$ in $Y$, and let $V^c = Y - V$.  Then $X \times V^c$ is closed, so $\Gamma_f \cap (X \times V^c)$ is closed.  Since $Y$ is compact, projecting the set $\Gamma_f \cap (X \times V^c)$ to $X$ gives a closed set whose points correspond to the points mapped \textit{outside} $V$ by the function $f$.  The complement of this set is therefore the open neighborhood required by the definition of continuity.
\end{proof}

\section{Proof of the Main Theorem}\label{ProofMain}

\subsection{Constructive Lemmas}\label{proofpart1}

In this section, we develop some results that will help to build up the surface described in the Main Theorem.  We begin by making a number of important observations about the flow $\phi_t$.  The first is an observation from calculus.

\begin{lemma}
There exists a positive number $q_\text{mono}$ such that $\dot{p}$ is negative and monotonically increasing as a function of $q$ for all $q > q_\text{mono}$ and for all $\theta$.
\label{preMonotone}
\end{lemma}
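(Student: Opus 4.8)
The plan is to get the sign claim for free from the defining sum and to handle monotonicity through the sign of $\partial\dot p/\partial q$. Writing
$$\dot p = -\sum_{i=1}^n \frac{m_i q}{\left(r_i^2(\theta)+q^2\right)^{3/2}},$$
I observe that for $q>0$ every summand is positive, since each $m_i>0$ and each denominator is positive. Hence $\dot p<0$ for \emph{all} $q>0$ and all $\theta$, which already settles the negativity part of the statement without any largeness assumption on $q$.

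For the monotonicity I would differentiate a single summand with respect to $q$, holding $\theta$ fixed. A routine quotient computation gives
$$\frac{\partial}{\partial q}\left(\frac{q}{\left(r_i^2+q^2\right)^{3/2}}\right) = \frac{r_i^2-2q^2}{\left(r_i^2+q^2\right)^{5/2}},$$
so that
$$\frac{\partial \dot p}{\partial q} = -\sum_{i=1}^n m_i\,\frac{r_i^2-2q^2}{\left(r_i^2+q^2\right)^{5/2}}.$$
The key observation is that the $i$-th term of this sum is strictly positive as soon as $2q^2>r_i^2$, i.e.\ once $q>r_i(\theta)/\sqrt2$. Thus whenever $q$ exceeds $r_i(\theta)/\sqrt2$ for every index $i$ at once, we get $\partial\dot p/\partial q>0$, so $\dot p$ is strictly increasing in $q$.

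It remains to make this threshold uniform in $\theta$, and this is the one step I expect to require care. The per-$\theta$ bound $r_i(\theta)/\sqrt2$ must be replaced by a single global constant, which is legitimate precisely because of the standing hypotheses on the planar configuration: a $T$-periodic bounded orbit keeps each $r_i(\theta)$ bounded above, so $R:=\max_{1\le i\le n}\sup_{\theta}r_i(\theta)$ is finite; note that allowing regularized collisions only drives some $r_i$ toward $0$ and so leaves this upper bound untouched. Setting $q_{\text{mono}}:=R/\sqrt2$, any $q>q_{\text{mono}}$ satisfies $2q^2>R^2\ge r_i^2(\theta)$ for every $i$ and every $\theta$, making each term of the displayed derivative strictly positive. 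Combining this with the first paragraph yields $\dot p<0$ and $\partial\dot p/\partial q>0$ for all $q>q_{\text{mono}}$ and all $\theta$, as claimed.
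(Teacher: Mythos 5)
Your proposal is correct and follows essentially the same route as the paper: both locate the unique positive critical point of each summand at $q = r_i(\theta)/\sqrt{2}$, use periodicity and continuity of the $r_i$ to bound this threshold uniformly in $\theta$, and take $q_\text{mono}$ to be the resulting maximum. You are merely more explicit about the quotient-rule computation and about the (trivial) negativity claim, which the paper leaves implicit.
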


\begin{proof}
Recall that $\dot{p}$ is a sum of functions of the form
$$h_i(q) = -\frac{m_iq}{\left(r_i^2(\theta) + q^2\right)^{3/2}}.$$
For a fixed value of $\theta$ with $r_i^2(\theta) > 0$, the function $h_i(q)$ has the shape shown in Figure \ref{accelcurvefigure}.  (The $r_i(\theta) = 0$ case becomes the asymptotic curve $\dot{p} = q^{-3}$.)

\begin{figure}[h]
\begin{center}
\includegraphics[scale=.25]{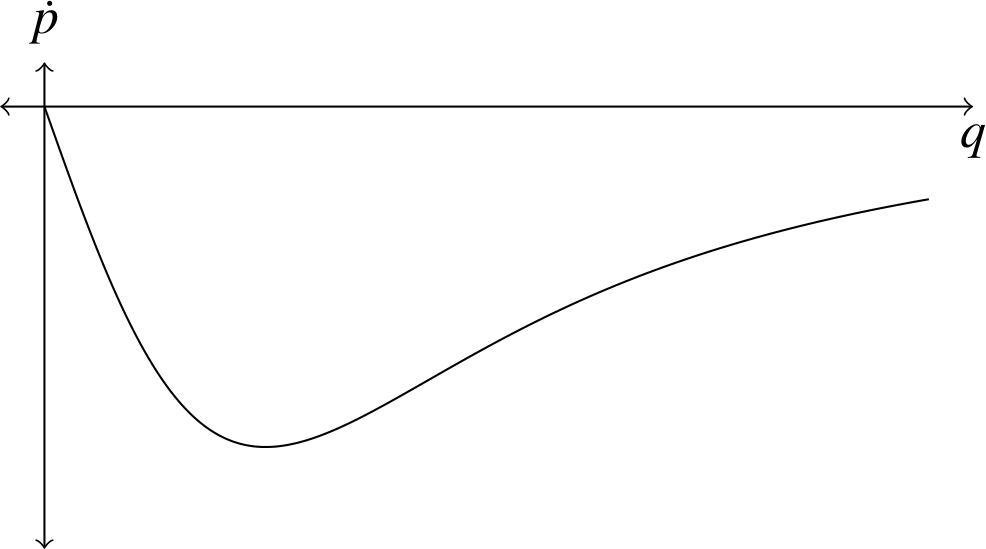}
\caption{Typical shape of $h_i(q)$.}
\label{accelcurvefigure}
\end{center}
\end{figure}

Using basic calculus, we find the single critical point of $h_i$ with $q > 0$ by evaluating $\partial \dot{p} / \partial{q}$.  This value occurs when $q^2 = r_i^2 / 2$, or when $q = r_i/\sqrt{2}$.  (Note that this still holds true for $r_i = 0$.)  Since each $r_i$ is continuous and periodic, each achieves a maximum value $R_i$ over its period.  So the maximum value of $q$ for which the above function can have its critical point is $R_i/\sqrt{2}$.  So $h_i(q)$ is increasing for all $q > R_i/\sqrt{2}$ regardless of the value of $\theta$.  Setting $q_\text{mono}$ to be the maximum of the values $R_i/\sqrt{2}$, then each $h_i(q)$ is increasing for $q > q_\text{mono}$ for any $\theta$.  Since $\dot{p}$ is simply the sum of all the $h_i$, $\dot{p}$ is increasing in $q$ for all $q > q_\text{mono}$.
\end{proof}

Let $\phi_t^q(q_0, p_0, \theta_0)$ represent the value of the $q$ variable under the flow $\phi_t$ with the prescribed initial conditions, and define $\phi_t^p$ and $\phi_t^\theta$ similarly.  It is worth noting that
$$\phi_t^\theta(q_0,p_0,\theta_0) = \theta_0 + t$$
for \textit{any} initial conditions.

\begin{lemma}
With $q_\text{mono}$ as defined in Lemma \ref{preMonotone}, let $q_1$, $q_2$, $p_1$, and $p_2$ be positive numbers with $q_\text{mono} \leq q_1 \leq q_2$ and $p_1 \leq p_2$, and let $\theta_0 \in [0,T]$ be arbitrary.  Let $t_\text{final}$ be the (possibly infinite) maximum value of $t$ for which both $\phi_t^p(q_1, p_1, \theta_0)$ and $\phi_t^p(q_2, p_2, \theta_0)$ are non-negative.  Then, for $t \in (0, t_\text{final})$ we have that both $\phi_t^q(q_1, p_1, \theta_0) \leq \phi_t^q(q_2, p_2, \theta_0)$ and $\phi_t^p(q_1, p_1, \theta_0) \leq \phi_t^p(q_2, p_2, \theta_0)$.
\label{MonotonicityLemma}
\end{lemma}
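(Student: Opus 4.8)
The plan is to exploit the fact that the two trajectories share the same $\theta$-coordinate at every instant: since $\dot\theta = 1$, both $\phi_t(q_1,p_1,\theta_0)$ and $\phi_t(q_2,p_2,\theta_0)$ sit at angle $\theta(t) = \theta_0 + t$ at time $t$, so at each moment they experience $\dot p$ as the \emph{same} function of $q$. Writing $Q_j(t) = \phi_t^q(q_j,p_j,\theta_0)$ and $P_j(t) = \phi_t^p(q_j,p_j,\theta_0)$ for $j = 1,2$, and setting $u = Q_2 - Q_1$ and $v = P_2 - P_1$, the assertion is precisely that $u(t) \geq 0$ and $v(t) \geq 0$ on $(0,t_\text{final})$, given $u(0), v(0) \geq 0$.

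First I would record a preliminary fact that unlocks Lemma \ref{preMonotone}: on $(0, t_\text{final})$ both $P_1$ and $P_2$ are non-negative by the very definition of $t_\text{final}$, so $\dot Q_j = P_j \geq 0$ and each $Q_j$ is non-decreasing; since $Q_j(0) = q_j \geq q_\text{mono}$, this forces $Q_j(t) \geq q_\text{mono}$ throughout, placing us squarely in the regime where $\dot p$ is monotonically increasing in $q$. Differentiating then gives the coupled system $\dot u = v$ and $\dot v = \dot P_2 - \dot P_1$, where the second right-hand side is the difference of $\dot p$ evaluated at $(Q_2,\theta)$ and $(Q_1,\theta)$ for the common $\theta = \theta_0 + t$. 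The structural heart of the argument is cooperativity: \emph{whenever} $u(t) \geq 0$, i.e. $Q_2(t) \geq Q_1(t)$, Lemma \ref{preMonotone} forces $\dot v(t) \geq 0$; hence on any subinterval where $u \geq 0$ the function $v$ is non-decreasing, and since $\dot u = v$, on any subinterval where $v \geq 0$ the function $u$ is non-decreasing.

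I would close the loop with a first-crossing argument. Suppose the conclusion fails and let $t^\ast$ be the infimum of the (open) set of times where $u < 0$ or $v < 0$; then on $[0,t^\ast]$ both $u$ and $v$ are non-negative, so by the monotonicity just noted $v(t^\ast) \geq v(0) \geq 0$ and $u(t^\ast) \geq u(0) \geq 0$, and at least one of $u(t^\ast), v(t^\ast)$ must equal $0$ (otherwise a whole neighborhood avoids the bad set). If $u(t^\ast) > 0$, then necessarily $v(t^\ast) = 0$; but $u > 0$ on a right-neighborhood of $t^\ast$ keeps $\dot v \geq 0$ there, so $v$ cannot dip below $0$ and $u$ stays positive, contradicting the choice of $t^\ast$. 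If instead $u(t^\ast) = 0$, then monotonicity of $u$ on $[0,t^\ast]$ forces $u \equiv 0$ and hence $v \equiv 0$ there, so $(Q_1,P_1)$ and $(Q_2,P_2)$ coincide at $t^\ast$; uniqueness of solutions then makes the two trajectories identical, again contradicting the existence of any crossing. Thus no crossing occurs and both inequalities persist on $(0,t_\text{final})$.

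The main obstacle I anticipate is not the monotonicity input but the careful bookkeeping at the first-crossing time: one must separately rule out the borderline case in which $v$ hits $0$ while $u$ remains positive (resolved by the cooperative sign $\dot v \geq 0$) and the fully degenerate case in which both differences vanish simultaneously (resolved by uniqueness of ODE solutions). The small but essential hypothesis check is verifying that each $Q_j$ remains at or above $q_\text{mono}$ for the entire interval, since that is exactly what legitimizes invoking Lemma \ref{preMonotone} and thereby supplies the cooperative structure on which everything rests.
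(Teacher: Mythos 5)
Your proposal is correct and follows essentially the same route as the paper: both form the differences of the two trajectories' $q$- and $p$-coordinates and show the closed positive quadrant in these difference variables is forward-invariant, using Lemma \ref{preMonotone} for the cooperative sign of $\dot v$ and uniqueness of solutions for the degenerate case. Your version merely executes the invariance step more carefully (via an explicit first-crossing argument, and by verifying that both trajectories stay above $q_\text{mono}$ so that Lemma \ref{preMonotone} applies), which is a welcome tightening rather than a different idea.
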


\begin{proof}
Define $\mathfrak{q} = \phi_t^q(q_2, p_2, \theta_0) - \phi_t^q(q_1, p_1, \theta_0)$ and $\mathfrak{p} = \phi_t^p(q_2, p_2, \theta_0) - \phi_t^p(q_1, p_1, \theta_0)$.  Then, by assumption, both $\mathfrak{q} \geq 0$ and $\mathfrak{p} \geq 0$.  It suffices to show that the set 
$$\{(\mathfrak{q}, \mathfrak{p}) \ : \ \mathfrak{q} \geq 0, \mathfrak{p} \geq 0\}$$
is forward-invariant.  We will do this by showing that the boundary of the region maps to the interior under the flow $\phi_t$.  Note that, by construction, $\dot{\mathfrak{q}} = \mathfrak{p}$.  If $\mathfrak{q} = 0$ and $\mathfrak{p} > 0$, then $\dot{\mathfrak{q}} > 0$.  On the other hand, if $\mathfrak{p} = 0$ and $\mathfrak{q} > 0$, then $\phi_t^q(q_2, p_2, \theta_0) > \phi_t^q(q_1, p_1, \theta_0)$.  This implies that $\dot{\mathfrak{p}} > 0$ when $\mathfrak{p} = 0$ by Lemma \ref{preMonotone}.  Lastly, if both $\mathfrak{q} = \mathfrak{p} = 0$, then $q_1 = q_2$ and $p_1 = p_2$, so $\mathfrak{q} = \mathfrak{p} = 0$ for all time by uniqueness of solution.  Hence, the indicated set is forward-invariant.
\end{proof}

In a physical sense, Lemma \ref{MonotonicityLemma} may be translated as the following: Consider the effect of placing two massless particles on the positive $z$-axis with some upward velocity.  If their initial conditions are not identical and satisfy the conditions of Lemma \ref{MonotonicityLemma}, then:
\begin{itemize}
\item If both start at the same position, then the particle initially moving faster will be \textbf{both} moving faster \textbf{and} located farther away from the origin as long as both continue to move away from the origin.
\item If both start with the same velocity, then the particle initially farther away from the origin will be \textbf{both} located farther away from the origin \textbf{and} moving faster as long as both continue to move away from the origin.
\item If initial positions are not equal, and the particle farther from the origin also has greater velocity, then the particle farther from the origin will be \textbf{both} moving faster \textbf{and} be located farther away from the origin as long as both continue to move away from the origin.
\end{itemize}

Next, we give some analysis of some important behaviors of $\Phi_t$.  For this, we define functions $\Phi_t^Q$ and $\Phi_t^P$ in an analogous fashion to $\phi_t^q$ and $\phi_t^p$.

\begin{lemma}
Let $M = m_1 + \cdots + m_n$.  Then, the set of all points $(Q,P,\theta)$ for which $P \geq \sqrt{2M}Q$ is forward-invariant.
\label{captureconds}
\end{lemma}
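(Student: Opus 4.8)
The plan is to reuse the boundary-invariance strategy already employed in the proof of Lemma \ref{MonotonicityLemma}. Writing $g(Q,P,\theta) = P - \sqrt{2M}\,Q$, the set in question is exactly the closed region $\{g \geq 0\}$ inside the phase space $[0,\infty)\times\mathbb{R}\times[0,T]$, whose relevant boundary is the surface $P = \sqrt{2M}\,Q$. Since the system \ref{FLOW1}--\ref{FLOW3} is autonomous in $(Q,P,\theta)$ and $\nabla g = (-\sqrt{2M},\,1,\,0)$ never vanishes, it suffices to show that the vector field is subtangent to the region along this boundary, i.e. that $\dot g \geq 0$ whenever $g = 0$. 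As in the earlier lemma, showing that the boundary maps into (or along) the region is what forces trajectories starting inside to remain inside.

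The computation itself is short. On the boundary I substitute $P = \sqrt{2M}\,Q$ into \ref{FLOW1} to get $\dot Q = -\tfrac12\sqrt{2M}\,Q^4$, so the $\dot Q$ term contributes $-\sqrt{2M}\,\dot Q = M Q^4$. For the $\dot P$ term I would use the crude but decisive bound that every denominator satisfies $(r_i^2(\theta)Q^4 + 1)^{3/2} \geq 1$, which gives $\dot P \geq -\sum_i m_i Q^4 = -M Q^4$. Combining these,
\[
\dot g \;=\; \dot P - \sqrt{2M}\,\dot Q \;=\; \sum_{i=1}^n m_i Q^4\!\left(1 - \frac{1}{(r_i^2(\theta)Q^4 + 1)^{3/2}}\right) \;\geq\; 0,
\]
exhibiting $\dot g$ on the boundary as a sum of manifestly nonnegative terms.

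The main obstacle is that this inequality is only weak: $\dot g = 0$ can occur, namely when $Q = 0$ (the already-escaped orbits) or at the isolated instants where some $r_i(\theta) = 0$. Thus I cannot simply invoke a strict ``boundary points inward'' argument and must justify that weak subtangency still yields forward-invariance. I would handle this in the Nagumo spirit: a point on $\{g=0\}$ with $Q>0$ and not all $r_i(\theta)$ vanishing has $\dot g > 0$ and is pushed strictly into $\{g>0\}$, while the degenerate equality locus cannot trap a trajectory on the wrong side -- in particular the orbit through $(0,0,\theta)$ has $\dot Q = \dot P = 0$ and stays on the boundary $g \equiv 0$ for all time (the parabolic escape orbit), which already lies in the closed set. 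Making precise that no trajectory crosses from $g>0$ to $g<0$ given $\dot g \geq 0$ on $\{g = 0\}$ -- for instance by a first-exit-time contradiction using continuity of $g$ along orbits together with uniqueness of solutions -- is the one place where care, rather than calculation, is required.
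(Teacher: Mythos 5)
Your proof is correct, and it rests on the same key estimate as the paper's --- namely $|\dot P| \le \sum_i m_i Q^4 = MQ^4$, obtained from $(r_i^2(\theta)Q^4+1)^{3/2}\ge 1$ --- but it packages that estimate differently. The paper divides $\dot P$ by $\dot Q$ to bound the slope $|dP/dQ|$ of trajectories in the $(Q,P)$-plane by $\sqrt{2M}$ throughout the region, and then argues geometrically (the ``fan'' of admissible directions) that a curve of slope at most $\sqrt{2M}$, moving down and to the left, cannot cross the line $P=\sqrt{2M}Q$, which has exactly that slope and terminates in the line of equilibria $Q=0$. You instead evaluate the time derivative of $g = P - \sqrt{2M}\,Q$ on the boundary $\{g=0\}$ and obtain the exact identity $\dot g = \sum_i m_iQ^4\bigl(1-(r_i^2(\theta)Q^4+1)^{-3/2}\bigr)\ge 0$, then invoke a Nagumo-type subtangency argument. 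Your route has two small advantages: it avoids the quotient $\dot P/\dot Q$, which degenerates precisely where the paper's picture is most delicate ($Q=0$ or $P=0$, e.g.\ at the corner of the region), and it makes explicit the step --- upgrading the weak inequality $\dot g\ge 0$ on the boundary to genuine forward invariance of the closed set --- that the paper's phrasing (``it cannot approach $P=\sqrt{2M}Q$'') leaves implicit; your first-exit-time sketch is the right way to close it. The paper's version, in turn, buys the reusable cone construction that it needs later in the continuity and surjectivity arguments for $g^*$. Note also that your identity pinpoints exactly when the flow is tangent to the boundary: $\dot g=0$ with $Q>0$ forces $r_i(\theta)=0$ for every $i$, i.e.\ total collapse of the planar configuration, so in all the configurations considered the subtangency is actually strict away from $Q=0$.
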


Two proofs of this Lemma will be given in this paper.  The first, presented here, is geometric in nature.  A second, more analytic proof is presented as the proof of Theorem \label{numCheck1}.

\begin{proof}
Let $(Q,P,\theta)$ be any point with $P \geq \sqrt{2M}Q$.  Using equations \ref{FLOW1} - \ref{FLOW3} and projecting onto the $QP$-plane, we may think of this region as the area above a line.  (See Figure \ref{figForwardInvariant}.)\\

Now, note that
\begin{align*}
|\dot{P}| &= \sum_{i = 1}^n \frac{m_iQ^4}{\left(r_i^2(\theta)Q^4 + 1\right)^{3/2}} \\
&\leq \sum_{i=1}^n m_iQ^4 \\
&=MQ^4 \\
&=\left(\sqrt{\frac{M}{2}}Q^3\right)\left(\sqrt{2M}Q\right) \\
&\leq \sqrt{\frac{M}{2}}Q^3P.
\end{align*} 
Then, we have that
\begin{align*}
\left|\frac{\partial P}{\partial Q} \right| &= \left|\frac{\dot{P}}{\dot{Q}}\right| \\
&\leq \frac{\sqrt{\frac{M}{2}}Q^3P}{\frac{1}{2}Q^3P} \\
&= 2\sqrt{\frac{M}{2}} \\
&= \sqrt{2M}.
\end{align*}

Geometrically, $\partial P/\partial Q$ represents the slope of a line in the $P, Q$ plane.  In our particular setting, this represents the directions that a trajectory of $\Phi_t^Q$ and $\Phi_t^P$ can take as $t$ increases.  Since both $\Phi_t^Q$ and $\Phi_t^P$ are decreasing, such trajectories must be decreasing in both variables.  Moreover, since the maximum slope that such a trajectory can have is $\sqrt{2M}$ and the line $Q = 0$ consists entirely of equilibria, it is impossible for a trajectory that begins in the $P \geq \sqrt{2M}Q$ region to escape it, as it cannot approach $P = \sqrt{2M}Q$.

\begin{figure}[h]
\begin{center}
\includegraphics[scale=.25]{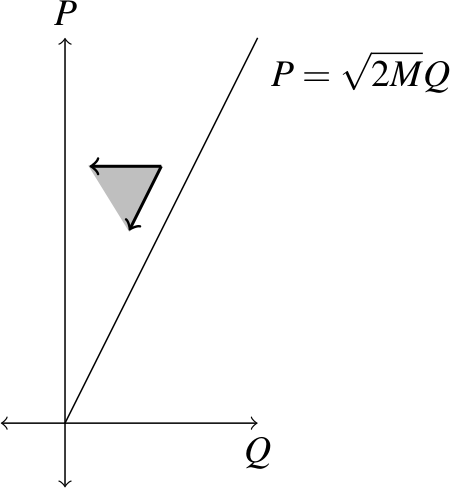}
\caption{The ``fan'' of possible directions of a trajectory of $Q$ and $P$ at a given point.}
\label{figForwardInvariant}
\end{center}
\end{figure}
\end{proof}

\subsection{The Core Construction}\label{proofpart2}

For this section, let $q_0 > q_\text{mono}$ be fixed, where $q_\text{mono}$ is defined as in Lemma \ref{preMonotone}, and let $Q_0 = q_0^{-1/2}$ be the corresponding value in the inverted coordinate frame.  The key step in the construction will be the following:

\begin{theorem}
There exists a continuous function $f(\theta):[0, T] \to \mathbb{R}$ such that 
$$\Phi_t^P(Q_0, f(\theta), \theta) \to 0 \text{ as } t \to \infty.$$
\end{theorem}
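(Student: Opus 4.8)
The plan is to run a shooting argument in the initial velocity, holding $Q_0$ and $\theta$ fixed while varying $p_0=P$, to define $f(\theta)$ as the threshold velocity separating escape from return, and then to verify that this threshold produces parabolic escape ($P\to 0$) and varies continuously in $\theta$ via Theorem \ref{closedGraphTheorem}. First I would record the one-dimensional picture along the ray $q=q_0$. Since $q_0>q_\text{mono}$ and $\dot p=-\sum_i m_i q (r_i^2+q^2)^{-3/2}<0$ whenever $q>0$, the velocity of an orbit launched from $(q_0,p_0,\theta)$ with $p_0>0$ is strictly decreasing as long as the particle stays above the origin, while its height only increases and stays above $q_\text{mono}$. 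Hence exactly one of two things occurs: either $p(t)$ stays positive for all time, forcing $q\to\infty$ (escape), or $p(t)$ reaches $0$ at a finite time at some height $q>q_0$, where $\dot p<0$ makes the crossing transversal and the particle turns around (return). By Lemma \ref{MonotonicityLemma} the set of escaping $p_0$ is upward closed; Lemma \ref{captureconds} shows every $p_0\ge\sqrt{2M}Q_0$ escapes, while every sufficiently small $p_0>0$ returns. I would therefore set $f(\theta)=\inf\{p_0>0:\text{the orbit from }(Q_0,p_0,\theta)\text{ escapes}\}\in(0,\sqrt{2M}Q_0]$.

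The central step is to show that the threshold orbit escapes \emph{parabolically}, i.e. $P_\infty:=\lim_{t\to\infty}\Phi_t^P=0$, which is exactly the claimed conclusion. The key is a robust, finite-time detector for hyperbolicity. Using $|\dot P|\le MQ^4$ from the proof of Lemma \ref{captureconds} together with the substitution $dt=dQ/\dot Q=-2\,dQ/(Q^3P)$, any orbit lying in the forward-invariant region $P\ge\sqrt{2M}Q$ from the moment it occupies $(Q_1,P_1)$ satisfies
$$P_1-P_\infty=\int_{t_1}^{\infty}|\dot P|\,dt\le\int_0^{Q_1}\frac{2MQ}{P}\,dQ\le\int_0^{Q_1}\frac{2MQ}{\sqrt{2M}\,Q}\,dQ=\sqrt{2M}\,Q_1,$$
so $P_\infty\ge P_1-\sqrt{2M}Q_1$. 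Consequently an orbit is hyperbolic ($P_\infty>0$) if and only if at some finite time it lies strictly above the line, $P>\sqrt{2M}Q$. I would then rule out both non-parabolic alternatives for the threshold orbit: it cannot return, since a transversal velocity-zero crossing persists under perturbation by continuous dependence, making a whole neighborhood of $f(\theta)$ return and contradicting that arbitrarily nearby larger data escape; and it cannot be hyperbolic, for then it is strictly above the line at some finite time, an open condition, so slightly smaller data also escape, contradicting $f(\theta)=\inf$. Thus $P_\infty=0$.

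Finally I would prove continuity by viewing $f$ as a map into the compact Hausdorff space $Y=[0,\sqrt{2M}Q_0]$ and applying Theorem \ref{closedGraphTheorem}, so that it suffices to show $\Gamma_f$ is closed. First I would record that the parabolic velocity is \emph{unique} at each $\theta$: if $p_0<p_0'$ both escaped parabolically, then with $\mathfrak q,\mathfrak p$ as in Lemma \ref{MonotonicityLemma} one has $\mathfrak p>0$ for $t>0$, and since $q(t;p_0')>q(t;p_0)>q_\text{mono}$, Lemma \ref{preMonotone} gives $\dot{\mathfrak p}>0$, so $\mathfrak p$ is increasing and bounded away from $0$, contradicting that both velocities tend to $0$. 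This yields the trichotomy that $p_0$ less than, equal to, or greater than $f(\theta)$ corresponds respectively to return, parabolic escape, and hyperbolic escape. Now given $(\theta_n,f(\theta_n))\to(\theta_*,v_*)$, if $v_*>f(\theta_*)$ the limit orbit is hyperbolic and hence strictly above the line at a finite time, an open condition inherited by the nearby data $(Q_0,f(\theta_n),\theta_n)$ for large $n$, forcing those parabolic orbits to be hyperbolic, a contradiction; and if $v_*<f(\theta_*)$ the limit orbit returns transversally, again inherited by nearby data, forcing those parabolic orbits to return, also a contradiction. Hence $v_*=f(\theta_*)$, so $\Gamma_f$ is closed and $f$ is continuous.

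The main obstacle is the middle step: continuous dependence on initial conditions is only a finite-time statement, whereas parabolicity is an infinite-time boundary condition, so a naive limiting argument fails. The integral estimate above is precisely what converts hyperbolicity into the finite-time, open condition ``strictly above $P=\sqrt{2M}Q$,'' and pairing it with the transversality of velocity-zero crossings is what makes both the parabolicity of the threshold orbit and the closedness of $\Gamma_f$ go through.
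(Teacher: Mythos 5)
Your argument is correct, and although it passes through the same landmarks as the paper's proof --- the return/escape dichotomy, the ordering from Lemma \ref{MonotonicityLemma}, uniqueness of the parabolic velocity, and Theorem \ref{closedGraphTheorem} applied to the compact target $[0,\sqrt{2M}Q_0]$ --- the core mechanism is genuinely different. The paper constructs two auxiliary functions, $g_*$ (the apex height $\Phi^Q_\tau$ on $\mathcal{P}_\text{return}$) and $g^*$ (the escape velocity $\lim_{t\to\infty}\Phi^P_t$ on $\mathcal{P}_\text{escape}$), proves each is continuous and onto, extends both by zero to their boundaries, glues them into a single continuous $g$, and extracts $f$ from the level set $g^{-1}(0)$, so that closedness of $\Gamma_f$ falls out as the preimage of a closed set. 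You never build $g$ at all: you define $f(\theta)$ as the infimum of the escaping velocities and supply instead the quantitative estimate $P_\infty \ge P_1 - \sqrt{2M}\,Q_1$, obtained by integrating the slope bound from Lemma \ref{captureconds} along the orbit. That estimate is the real novelty of your route: it turns hyperbolicity into the finite-time open condition $P>\sqrt{2M}Q$, which, paired with the (trivially open) return condition, settles both the parabolicity of the threshold orbit and the closedness of the graph by one openness-versus-infimum argument. Your version is leaner and is arguably tighter precisely where the paper is thinnest --- the continuity of $g^*$ and of the zero-extensions of $g_*$ and $g^*$ are justified there by the informal cone picture, whereas your integral bound does that work explicitly. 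What the paper's construction buys in exchange is extra structure that you do not produce: the continuity and surjectivity of the apex-height and escape-velocity functions on either side of the threshold, which the paper leans on for its description of $\mathcal{P}_\text{return}$ and $\mathcal{P}_\text{escape}$ and for the numerical sections.
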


The remainder of this section will be the proof of this theorem.  The function $f$ will arise from the level set of another function $g$, which has to be defined in a piecewise fashion.  Figure \ref{constructS} will help to keep much of the notation straight. \\

\begin{figure}[h]
\begin{center}
\includegraphics[scale=.35]{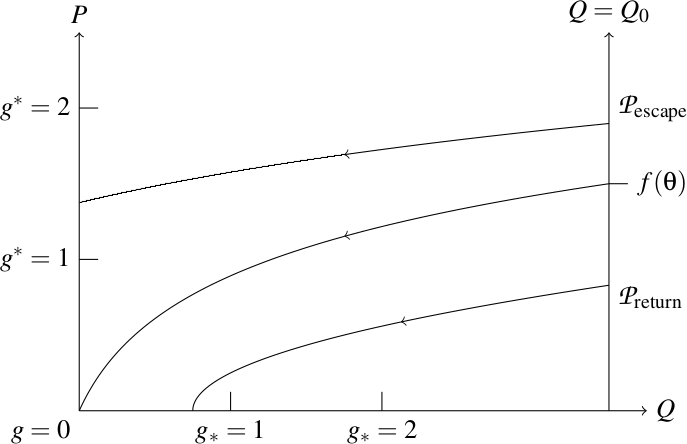}
\caption{Simplified diagram for much of the notation in Section \ref{proofpart2}.  The $\theta$ variable has been removed for ease of reading, but trajectories should be considered as taking place in $(Q,P,\theta)$ space.}
\label{constructS}
\end{center}
\end{figure}

To begin with, define the sets
$$\mathcal{P}_\text{return} = \{(Q_0, P, \theta) \ : \ P \geq 0, \ \theta \in [0, T], \ \Phi_t^P(Q_0, P, \theta)<0 \text{ for some finite }t \geq 0\}$$
and
$$\mathcal{P}_\text{escape} = \{(Q_0, P, \theta) \ : \ P \geq 0, \ \theta \in [0, T], \ \lim_{t \to \infty} \Phi_t^P(Q_0, P, \theta)) > 0\}.$$

Physically, the set $\mathcal{P}_\text{return}$ corresponds to initial conditions that cause the massless particle to return to the origin, as the velocity $P$ eventually becomes negative.  On the other hand, $\mathcal{P}_\text{escape}$ corresponds to initial conditions that lead to the massless particle to escape to infinity with positive velocity (or hyperbolic escape).  Certainly both sets are non-empty, as $(Q_0, 0, \theta) \in \mathcal{P}_\text{return}$ for any $\theta \in [0, T]$, and $\mathcal{P}_\text{escape}$ contains points for which Lemma \ref{captureconds} applies.  Also, if $0 \leq P_1 < P_2$, then by Lemma \ref{MonotonicityLemma}:
\begin{itemize}
\item If $(Q_0, P_1, \theta) \in \mathcal{P}_\text{escape}$, then $(Q_0, P_2, \theta) \in \mathcal{P}_\text{escape}$, and
\item If $(Q_0, P_2, \theta) \in \mathcal{P}_\text{return}$, then $(Q_0, P_1, \theta) \in \mathcal{P}_\text{return}$. \\
\end{itemize}

As the first step in building $g$, for all elements of $\mathcal{P}_\text{return}$, define
$$\tau(Q_0, P, \theta) = \max \{t \geq 0 \ : \ \Phi_t^P(Q_0,P,\theta) \geq 0\},$$
and define $g_*:\mathcal{P}_\text{return} \to \mathbb{R}$ by
$$g_*(Q_0,P,\theta) = \Phi_{\tau(Q_0,P,\theta)}^Q(Q_0,P,\theta).$$
In other words, $g_*$ gives the position (in the inverted coordinate frame) at which the massless particle achieves its maximum before returning to the origin.  Since $\tau$ is continuous by continuity with respect to initial conditions, then $g_*$ is also continuous by composition.  \\

By construction, the function $g_*$ can take values only in the range $(0, Q_0]$.  Moreover, by considering the backwards-time flow $\Phi_{-t}(Q,0,\theta)$ over the set of points where $Q \in (0,Q_0]$ and $\theta \in [0,T]$, it is apparent that $g_*$ is onto.  Since $\mathcal{P}_\text{return}$ is the pre-image of the relatively open set set $(0,Q_0]$, then $\mathcal{P}_\text{return}$ is relatively open in the $Q = Q_0, P \geq 0$ plane.  In fact, $g_*^{-1}((0,Q_0))$ is precisely $g_*^{-1}((0,Q_0])$ with the line $Q = Q_0$, $P = 0$ removed, and has two boundary curves -- the aforementioned $Q = Q_0$, $P = 0$ line, and an upper ($P > 0$) yet-undetermined boundary.  \\

We wish to extend this to a continuous function on $\overline{\mathcal{P}_\text{return}}$, where $\overline{\mathcal{P}_\text{return}}$ denotes the closure of $\mathcal{P}_\text{return}$.  For any sequence of points in $\mathcal{P}_\text{return}$ that approach the open $P > 0$ boundary of $\mathcal{P}_\text{return}$, the sequence must eventually lie outside of $g_*^{-1}((\epsilon, Q_0])$ for any $\epsilon > 0$.  Hence, defining $g_*$ to be zero on the open boundary of $\mathcal{P}_\text{return}$ is a continuous extension of $g_*$ to $\overline{\mathcal{P}_\text{return}}$. \\

We define a similar function $g^* : \mathcal{P}_\text{escape} \to \mathbb{R}$ by
$$g^*(Q_0, P, \theta) = \lim_{t \to \infty} \Phi_t^P(Q_0, P, \theta).$$
Physically, this function describes the velocity with which the massless particle escapes to infinity.  By uniqueness of solutions to ODEs, this function is well-defined. 

\begin{lemma}
The function $g^*$ as just defined is continuous and onto $(0,\infty)$.
\end{lemma}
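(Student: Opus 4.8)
The plan is to establish the two claims—continuity and surjectivity onto $(0,\infty)$—separately, leveraging the monotonicity structure already in hand. First I would prove surjectivity, which I expect to be the more routine half. The upper bound on $g^*$ is absent (escape velocities can be arbitrarily large), so I would exhibit, for any target $v > 0$, an initial condition producing escape velocity $v$. The natural approach is a limiting argument: as $P \to \infty$ with $Q_0$ fixed, the particle escapes essentially freely, so $g^*(Q_0, P, \theta) \to \infty$; meanwhile $g^*$ can be made arbitrarily small by approaching the boundary between $\mathcal{P}_\text{escape}$ and $\mathcal{P}_\text{return}$ from above. Together with continuity (established below) and the intermediate value theorem applied along a vertical line $\{(Q_0, P, \theta_0) : P \geq 0\}$, this yields that every value in $(0,\infty)$ is attained.

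For continuity, I would invoke Theorem \ref{closedGraphTheorem}, which is exactly why it was proven earlier. The obstacle is that the target space $(0,\infty)$ is not compact, so the Closed Graph Theorem does not apply directly. The remedy is to work in the one-point compactification, or equivalently to extend $g^*$ continuously to a map into the compact space $[0,\infty]$ by assigning the boundary value appropriately: on the lower boundary of $\mathcal{P}_\text{escape}$ (where trajectories limit onto the separating behavior) we set the extended function to $0$, matching the extension of $g_*$, and we let $+\infty$ serve as the other compactification point. With a compact Hausdorff codomain secured, it suffices to show the graph $\Gamma_{g^*}$ is closed. Given a sequence $(Q_0, P_k, \theta_k) \to (Q_0, P, \theta)$ in $\mathcal{P}_\text{escape}$ with $g^*(Q_0, P_k, \theta_k) \to L$, I would show $L = g^*(Q_0, P, \theta)$ using continuous dependence on initial conditions together with Lemma \ref{MonotonicityLemma}, which pins down how escape velocities are ordered and prevents the limit from jumping.

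The heart of the argument—and the step I expect to be the main obstacle—is controlling the limit $t \to \infty$ uniformly enough to pass limits through $g^*$. The difficulty is that $g^*$ is defined by a limit in $t$, and continuous dependence on initial conditions holds only on finite time intervals; naively, the convergence $\Phi_t^P \to g^*$ could degrade as initial conditions vary. To handle this I would use Lemma \ref{captureconds}: once a trajectory enters the forward-invariant cone $P \geq \sqrt{2M}Q$, its $P$-coordinate is monotonically controlled and the remaining change in $P$ is bounded by an integral that is uniformly small. Concretely, for initial conditions in a compact neighborhood within $\mathcal{P}_\text{escape}$, I would produce a uniform time $t_0$ after which all trajectories have entered the cone, and then bound the tail variation $|g^*(Q_0,P,\theta) - \Phi_{t_0}^P(Q_0,P,\theta)|$ uniformly; combining this uniform tail estimate with finite-time continuous dependence gives the equicontinuity needed to close the graph.

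An alternative, should the compactification bookkeeping prove awkward, is to prove continuity directly via an $\varepsilon$–$\delta$ argument built from the same two ingredients—uniform tail control from Lemma \ref{captureconds} and finite-time continuous dependence—thereby sidestepping Theorem \ref{closedGraphTheorem} entirely. Either way, the quantitative uniform-tail estimate is the crux; once it is in place, both continuity and the surjectivity endpoints follow with little additional effort.
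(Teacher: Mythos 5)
Your proposal is correct in substance and rests on the same key mechanism as the paper's proof --- the bounded fan of directions $|dP/dQ|\le\sqrt{2M}$ coming from Lemma \ref{captureconds}, which is exactly your ``uniform tail estimate'' --- but it packages the argument differently. The paper proves continuity directly: it builds a truncated open cone $\mathcal{C}$ near $Q=0$ that is forward-invariant and traps the limiting $P$-value in a prescribed small interval, then observes that $\bigcup_{t\ge 0}\Phi_{-t}(\mathcal{C}\times[0,T])$ meets the plane $Q=Q_0$ in a relatively open set; this shows preimages of basic open sets under $g^*$ are open. You instead route continuity through Theorem \ref{closedGraphTheorem} after compactifying the codomain to $[0,\infty]$; that is legitimate (the paper reserves that theorem for the continuity of $f$ later), but it costs extra bookkeeping in verifying closedness of the graph, and your fallback $\varepsilon$--$\delta$ argument is essentially the paper's proof rewritten. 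The one soft spot is surjectivity: your claim that $g^*$ becomes arbitrarily small ``by approaching the boundary between $\mathcal{P}_\text{escape}$ and $\mathcal{P}_\text{return}$ from above'' is not justified at this stage --- nothing yet rules out $\inf g^*>0$ on $\mathcal{P}_\text{escape}$, since the fact that the common boundary carries escape velocity $0$ is only established afterwards (via Lemma \ref{singletonLemma} and the extension of $g^*$ by zero). The paper fills this gap constructively with a second cone $\mathcal{C}_1$ placed below the line $P=L$ near the origin of the $(Q,P)$-plane: points near the line $P=\sqrt{2M}Q$ with $Q$ small lie in the forward-invariant region, escape with limiting velocity in $(0,\sqrt{2M}Q]$, and their backward orbits reach $Q=Q_0$ inside $\mathcal{P}_\text{escape}$. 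You should replace the boundary-approach assertion with this (or an equivalent) explicit construction; the large-$P$ end of your intermediate-value argument is fine.
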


\begin{proof}

Recall from the proof of Lemma \ref{captureconds} that there is a limited interval of directions (thought of as slopes of lines) that trajectories in the $(Q,P)$ plane can take under $\Phi_t$.  Using this, construct a truncated open ``cone'' $\mathcal{C}$ in the $(Q,P)$ plane as pictured in Figure \ref{gSupStarCts}.  Then, for any point $(Q, P, \theta)$ with $(Q, P) \in \mathcal{C}$, we know that $\Phi_t(Q,P,\theta) \in \mathcal{C} \times [0, T]$ for all $t > 0$.  Let $U$ be the union of all $\Phi_{-t}(\mathcal{C} \times [0,T])$ for $t \geq 0$.  Then the set of all points in $U$ with first coordinate $Q_0$ forms the relatively open set in $\mathcal{P}_\text{escape}$ required for the definition of continuity. \\

\begin{figure}[h]
\begin{center}
\includegraphics[scale=.35]{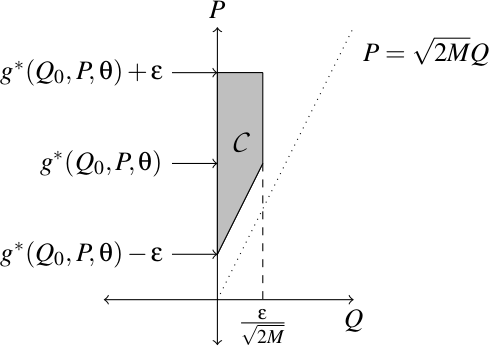}
\caption{Construction of the truncated ``cone'' $\mathcal{C}$.}
\label{gSupStarCts}
\end{center}
\end{figure}

To show that $g^*$ is onto, let $L$ be any positive real number.  Construct two non-intersecting ``cones'' $\mathcal{C}_1$ and $\mathcal{C}_2$ as in Figure \ref{gSupStarCts}, with a sufficiently small $\epsilon$ so that neither borders the point $(0,L)$, and so that $\mathcal{C}_1$ lies below the line $P = L$ and $\mathcal{C}_2$ lies above the line $P = L$.  (See Figure \ref{gSupStarOnto}.)  Then the pre-image of both $\mathcal{C}_i \times [0,T]$ under $\Phi_t$ intersecting $\mathcal{P}_\text{escape}$ contain two points which map to values of $g^*$ which are above and below $L$.  By continuity of $g^*$, there must then be a point at which $g^* = L$.
\end{proof}

\begin{figure}[h]
\begin{center}
\includegraphics[scale=.35]{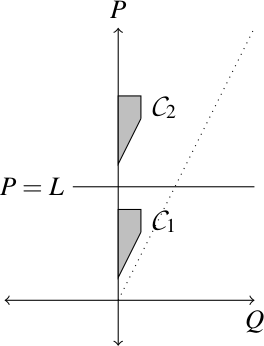}
\caption{The function $g^*$ is onto.}
\label{gSupStarOnto}
\end{center}
\end{figure}

Similar to before, we now note that $\mathcal{P}_\text{escape}$ is open, as it is the continuous pre-image of the set $(0,\infty)$.  We extend $g^*$ to $\overline{\mathcal{P}_\text{escape}}$ by defining $g^*$ to be zero on the boundary points.  Such an extension is continuous.  Finally, let $\mathcal{P} = \overline{\mathcal{P}_\text{return}} \cup \overline{\mathcal{P}_\text{escape}}$, and define $g:\mathcal{P} \to \mathbb{R}$ by
\begin{equation*}
   g(Q_0,P,\theta) = \left\{
     \begin{array}{lr}
       g^*(Q_0,P,\theta) & : (Q_0,P,\theta) \in \overline{\mathcal{P}_\text{escape}}\\
       -g_*(Q_0,P,\theta) & : (Q_0,P,\theta) \in \overline{\mathcal{P}_\text{return}} \\
     \end{array}
   \right.
\end{equation*}
It now remains to show that $\mathcal{P}$ is the set of all points with $Q = Q_0$ and $P \geq 0$.

\begin{lemma}
For $Q = Q_0$ and a given $\theta_0$, there is a single value of $P$ satisfying $g(Q_0, P, \theta_0) = 0$.
\label{singletonLemma}
\end{lemma}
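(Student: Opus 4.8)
The plan is to fix $\theta_0$ and analyze the one-parameter family of orbits along the ray $\{(Q_0, P, \theta_0) : P \geq 0\}$, showing that $g$ is strictly negative on an initial interval, strictly positive on a terminal interval, and vanishes at exactly the single value of $P$ separating them. First I would record the structure of this ray. Writing $R = \{P \geq 0 : (Q_0,P,\theta_0) \in \mathcal{P}_\text{return}\}$ and $E = \{P : (Q_0,P,\theta_0) \in \mathcal{P}_\text{escape}\}$, the monotonicity consequences of Lemma \ref{MonotonicityLemma} already noted (escape is preserved upward in $P$, return downward) show that $R$ is an interval containing $0$ and $E$ is an interval containing all sufficiently large $P$, the latter by Lemma \ref{captureconds} since $P \geq \sqrt{2M}\,Q_0$ forces escape. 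Setting $a = \sup R$ and $b = \inf E$, disjointness of the two sets gives $0 < a \leq b < \infty$. Because $\mathcal{P}_\text{return}$ is relatively open and $\mathcal{P}_\text{escape}$ is open, neither $a \in R$ nor $b \in E$, so $R = [0,a)$ and $E = (b,\infty)$.

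The key step is to show $a = b$, i.e.\ that the ``gap'' $[a,b]$ collapses to a point. I would first classify every $P$ in the gap: such a $P$ lies in neither $R$ nor $E$, so (recalling $P=p$) its velocity never becomes negative; hence $q$ is nondecreasing and, since $\dot p < 0$ for $q > q_\text{mono}$ by Lemma \ref{preMonotone}, $p$ is decreasing and $q \to \infty$, forcing the velocity to converge to a nonnegative limit which, by exclusion from $\mathcal{P}_\text{escape}$, must be $0$; thus the orbit is parabolic. It then suffices to prove there is \emph{at most one} parabolic orbit on the ray, and this is the step I expect to be the main obstacle. I would argue it by contradiction: if $P_1 < P_2$ were both parabolic, then both velocities stay nonnegative for all time, so Lemma \ref{MonotonicityLemma} applies for all $t>0$ and, because the two initial conditions differ and the boundary of the positive $(\mathfrak{q},\mathfrak{p})$-quadrant maps into its interior, yields the strict inequalities $\phi_t^q(q_0,P_1,\theta_0) < \phi_t^q(q_0,P_2,\theta_0)$ and $\phi_t^p(q_0,P_1,\theta_0) < \phi_t^p(q_0,P_2,\theta_0)$. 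Setting $\mathfrak{p}(t) = \phi_t^p(q_0,P_2,\theta_0) - \phi_t^p(q_0,P_1,\theta_0)$ and using that both $q$-values exceed $q_\text{mono}$ together with the monotonicity of $\dot p$ in $q$ from Lemma \ref{preMonotone}, I get $\dot{\mathfrak{p}} > 0$ for $t > 0$; hence $\mathfrak{p}$ is bounded below by a positive constant for large $t$. This contradicts the fact that both parabolic velocities tend to $0$, so $\mathfrak{p}(t) \to 0$. Therefore $a = b =: P_0$, the gap is the single parabolic orbit, and $R = [0,P_0)$, $E = (P_0,\infty)$.

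Finally I would read off the sign of $g$. On $R = [0,P_0)$ every point lies in $\mathcal{P}_\text{return}$, where $g_*$ takes values in $(0,Q_0]$, so $g = -g_* < 0$; on $E = (P_0,\infty)$ every point lies in $\mathcal{P}_\text{escape}$, where $g^* > 0$, so $g = g^* > 0$. At $P_0$ the point lies in both $\overline{\mathcal{P}_\text{return}}$ and $\overline{\mathcal{P}_\text{escape}}$, and since the extensions of $g_*$ and $g^*$ were each defined to vanish on the respective boundaries, $g(Q_0,P_0,\theta_0) = 0$ is consistently defined. As $g$ is strictly negative below $P_0$ and strictly positive above it, $P_0$ is the unique zero, which is exactly the assertion of the lemma. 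As a byproduct this shows the ray equals $\overline{\mathcal{P}_\text{return}} \cup \overline{\mathcal{P}_\text{escape}}$, which settles the remark preceding the statement that $\mathcal{P}$ is the entire $Q = Q_0$, $P \geq 0$ half-plane.
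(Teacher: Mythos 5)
Your proposal is correct, and its decisive step --- that two distinct parabolic initial velocities $P_1 < P_2$ would have their difference increasing under the flow by Lemma \ref{MonotonicityLemma}, contradicting both tending to zero --- is exactly the paper's proof. The additional scaffolding you supply (the interval structure of the return and escape sets along the ray, the classification of gap points as parabolic, and the sign of $g$ on each piece) is material the paper disposes of in the discussion surrounding the lemma rather than in the proof itself, so nothing essential differs.
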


\begin{proof}
Suppose there are two values $P_1$ and $P_2$ with $P_1 < P_2$ so that
$$\Phi_t^P(Q_0, P_i, \theta_0) \to 0 \text{ as } t \to \infty$$
for $i = 1$, $2$.  Then, by Lemma \ref{MonotonicityLemma} and the fact that both are assumed to escape to infinity, we must have $P_1 < P_2$ for all $t > 0$.  Furthermore, by the proof of Lemma \ref{MonotonicityLemma}, the difference $P_2 - P_1$ must be increasing for all $t$.  Thus, it is impossible for 
$$\lim_{t \to \infty} \Phi_t^P(Q_0, P_1, \theta_0) = 0 = \lim_{t \to \infty} \Phi_t^P(Q_0, P_2, \theta_0)$$
for $P_1 \neq P_2$.
\end{proof}

As a consequence, the upper boundary of $\mathcal{P}_\text{return}$ and the boundary of $\mathcal{P}_\text{escape}$ must be the same, as it is impossible for them to have an interval of any positive length separating them for any fixed value of $\theta$.  We can now define $f(\theta)$ to be the unique value such that $g(Q_0, f(\theta), \theta) = 0$.  By construction of $g$, this gives the property
$$\Phi_t^P(Q_0, f(\theta), \theta) \to 0 \text{ as } t \to \infty$$
automatically.  Since both of $g_*$ and $g^*$ are continuous in their respective domains, and they both take the value of $0$ on their shared boundary, the new function $g$ is continuous.  

\begin{lemma}
The function $f$ is well-defined and continuous.
\end{lemma}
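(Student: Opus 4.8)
The plan is to separate the statement into two claims—well-definedness and continuity—and to handle the continuity by invoking the Closed Graph Theorem (Theorem \ref{closedGraphTheorem}), which is precisely the topological tool introduced for this purpose.

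First I would establish well-definedness by fixing $\theta \in [0,T]$ and analyzing the fiber $\{(Q_0, P, \theta) : P \geq 0\}$. By Lemma \ref{captureconds}, every point with $P \geq \sqrt{2M}\,Q_0$ lies in the forward-invariant region and therefore escapes hyperbolically, so such points belong to $\mathcal{P}_\text{escape}$; at the other end, $(Q_0, 0, \theta) \in \mathcal{P}_\text{return}$. The monotonicity consequences of Lemma \ref{MonotonicityLemma} recorded above show that along this fiber $\mathcal{P}_\text{escape}$ is upward-closed in $P$ while $\mathcal{P}_\text{return}$ is downward-closed in $P$. Hence the threshold
$$P^*(\theta) = \sup\{P \geq 0 : (Q_0, P, \theta) \in \mathcal{P}_\text{return}\}$$
is finite, lies in $[0, \sqrt{2M}\,Q_0]$, and is the unique value at which $g(Q_0, P, \theta) = 0$: existence of the zero follows because this threshold separates the two closed sets, and uniqueness is exactly Lemma \ref{singletonLemma}. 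This simultaneously verifies that $\mathcal{P}$ exhausts the fiber and lets me set $f(\theta) := P^*(\theta)$.

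For continuity, the key observation is that the bound $0 \leq f(\theta) \leq \sqrt{2M}\,Q_0$ lets me regard $f$ as a map $f : [0,T] \to Y$ with $Y = [0, \sqrt{2M}\,Q_0]$, which is compact and Hausdorff. This places us in the exact setting of Theorem \ref{closedGraphTheorem}, so it suffices to prove that the graph $\Gamma_f$ is closed in $[0,T] \times Y$. I would do this sequentially: given $(\theta_n, f(\theta_n)) \to (\theta, P)$, continuity of $g$ (established just before this lemma) together with $g(Q_0, f(\theta_n), \theta_n) = 0$ forces $g(Q_0, P, \theta) = 0$ in the limit; Lemma \ref{singletonLemma} then identifies $P = f(\theta)$, so the limit point lies in $\Gamma_f$ and the graph is closed. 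Theorem \ref{closedGraphTheorem} then delivers continuity of $f$.

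The main obstacle is not the continuity argument itself—which is clean once graph-closedness and the compact codomain are in place—but rather the groundwork ensuring that the zero set of $g$ genuinely is the graph of a function: single-valuedness (supplied by Lemma \ref{singletonLemma}) and total coverage of each fiber by $\mathcal{P}$. The latter hinges on the fact that no initial condition can evade the dichotomy of returning versus hyperbolic escape except by landing on the shared boundary where $g = 0$, which is secured by combining the forward-invariance of Lemma \ref{captureconds} with the strict monotone separation of trajectories from Lemma \ref{MonotonicityLemma}.
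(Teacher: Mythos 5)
Your proposal is correct and follows essentially the same route as the paper: well-definedness via Lemma \ref{singletonLemma}, and continuity by observing that the graph of $f$ is the closed set $g^{-1}(0)$ and invoking Theorem \ref{closedGraphTheorem} with the compact codomain $[0,\sqrt{2M}Q_0]$. Your added detail on why each fiber actually contains a zero of $g$ is handled in the paper by the discussion preceding the lemma, so the two arguments match in substance.
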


\begin{proof}

We know $f$ is well-defined by Lemma \ref{singletonLemma}.  Note that the set of all points $(Q_0, f(\theta), \theta)$ is closed, as it is precisely the set $g^{-1}(0)$.  Then, by Theorem \ref{closedGraphTheorem}, we have that $f$ is continuous, as the range of $f$ is the compact interval $[0, \sqrt{2M}Q_0]$.
\end{proof}

\subsection{The Final Parts}\label{finishMain}

Here, we complete the proof of the Main Theorem and give some properties of the resulting surface $\mathcal{S}$.  Let $\mathcal{G}$ be the set of all points $(Q_0, f(\theta), \theta)$, with $Q_0$ as defined in the previous section.  Then the image of $\mathcal{G}$ under $\Phi_t$ for $t \in \mathbb{R}$ gives a topological 2-manifold $\mathcal{S}$ which is $\Phi_t$-invariant and lies in the $q > 0$, $p > 0$ portion of phase space.  Since changing the signs of $q$ and $p$ changes the sign of $\dot{q}$ and $\dot{p}$ in \ref{flow21} and \ref{flow22}, the surface $\mathcal{S}$ is mirrored in the $q < 0$, $p < 0$ region ``for free''.  Using the same coordinate changes for $Q$ and $P$ and running time backwards, we can construct an analogous surface $\mathcal{S}$ in the $q > 0$, $p < 0$ region and use symmetry to get the remaining two portions. \\

\begin{theorem}
\label{boundedp}
If $r_i(t) \neq 0$ for all $i$ and for all $t$, then the $p$-coordinates of all points in $\mathcal{S}$ are bounded.  In other words, $\mathcal{S}$ lies between the two planes $p = \pm B$ for some $0 < B < \infty$.
\end{theorem}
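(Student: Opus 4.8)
The plan is to reduce the statement to a single one-dimensional work--energy estimate, carried out along the parabolic orbits that foliate $\mathcal{S}$, and to isolate exactly where the no-collision hypothesis enters. The only use of that hypothesis is the following: since each $r_i$ is continuous and $T$-periodic and never vanishes, it attains a positive minimum over $[0,T]$, so there is a constant $r_\text{min} > 0$ with $r_i(\theta) \geq r_\text{min}$ for every $i$ and every $\theta$. This bounds the depth of the effective well felt at the origin and lets me dominate the acceleration in \eqref{flow22} by the corresponding single-parameter expression with each $r_i(\theta)$ replaced by $r_\text{min}$.

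Before the computation I would reduce to one branch. It suffices to bound $p$ on the portion of $\mathcal{S}$ lying in the $q > 0$, $p > 0$ region, since the other three branches are obtained from this one by the $(q,p)\mapsto(-q,-p)$ symmetry of \eqref{flow21}--\eqref{flow22} and by time reversal, exactly as described in the construction of $\mathcal{S}$. Both operations preserve $|p|$, so any bound $p \leq B$ obtained on the $q>0$, $p>0$ branch yields $|p|\leq B$ on all of $\mathcal{S}$.

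For the core estimate, fix any point $(q_1,p_1,\theta_1)$ of $\mathcal{S}$ with $q_1 > 0$, $p_1 > 0$. Because $\mathcal{S}$ is the $\Phi_t$-image of $\mathcal{G}$ and every orbit through $\mathcal{G}$ satisfies $\Phi_t^P \to 0$, the forward orbit through this point escapes parabolically. I would first note that $p$ is strictly decreasing and stays positive for all finite $t$ (were $p$ to reach $0$ at a finite time, the point would lie in $\mathcal{P}_\text{return}$), and that consequently $q\to\infty$ while $p\to 0$. Using $\dot q = p > 0$ to reparametrize the orbit by $q$, the identity $\tfrac{d}{dt}\!\left(\tfrac12 p^2\right) = p\dot p = -\dot q\sum_{i} m_i q/(r_i^2(\theta)+q^2)^{3/2}$ integrates from $(q_1,p_1)$ out to $(q=\infty,\,p=0)$ to give
$$\tfrac12 p_1^2 = \int_{q_1}^{\infty}\sum_{i=1}^n \frac{m_i q}{\left(r_i^2(\theta(q))+q^2\right)^{3/2}}\,dq.$$
Since $q/(r^2+q^2)^{3/2}$ is decreasing in $r$, replacing each $r_i(\theta(q))$ by $r_\text{min}$ only enlarges the integrand, and the resulting integral is elementary, so that with $M = m_1 + \cdots + m_n$,
$$\tfrac12 p_1^2 \leq M\int_{0}^{\infty}\frac{q}{\left(r_\text{min}^2+q^2\right)^{3/2}}\,dq = \frac{M}{r_\text{min}}.$$
Thus every such point satisfies $p_1 \leq \sqrt{2M/r_\text{min}}$, and I would take $B = \sqrt{2M/r_\text{min}}$; combined with the symmetry reduction this gives $|p| \leq B$ on all of $\mathcal{S}$.

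The step requiring the most care is justifying the reparametrization and the improper integral: I must confirm that along the separatrix orbit $q\to\infty$ (not merely that $p\to 0$) and that the integral converges at its upper limit. The first point I would settle by noting that if $q$ stayed bounded while $p\to 0$, then $\dot p$ would tend to a strictly negative constant, driving $p$ negative, a contradiction; convergence at infinity is then immediate because the integrand decays like $M/q^2$. Everything else is routine once $r_\text{min} > 0$ is in hand, which is precisely the content of the hypothesis $r_i(t)\neq 0$ and the only place it is needed.
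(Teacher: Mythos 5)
Your argument is correct, but it takes a genuinely different route from the paper's. The paper never integrates out to infinity: it starts from the set $\mathcal{G}$ at $q = q_0$, where $p$ is already confined to the bounded range of $f$, notes that forward images under $\phi_t$ only decrease $p$, and then controls the backward-time segment down to the plane $q = 0$ by multiplying a uniform sup bound on the force (available because $r_i \geq \epsilon > 0$) by the bounded transit time $q_0/p_x$. Your proof instead runs the work--energy identity $\tfrac{d}{dt}\bigl(\tfrac12 p^2\bigr) = p\dot p$ forward along each parabolic separatrix orbit, reparametrizes by $q$, and uses the boundary condition $p \to 0$, $q \to \infty$ to get the closed-form bound $\tfrac12 p_1^2 \leq M/r_{\min}$ at every point of the branch in a single stroke. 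This is really the estimate of inequality \eqref{EInequalities} from Section \ref{preNumerics} pushed to $t_b = \infty$ with $\mathfrak{R}$ replaced by the minimum of the $r_i$, so it ties the theorem to the same energy machinery as Theorems \ref{numCheck1} and \ref{numCheck2}. What your version buys is an explicit constant $B = \sqrt{2M/r_{\min}}$ and the fact that you never need the transit time to $q=0$ to be uniformly bounded --- a point the paper's argument implicitly relies on (it requires $f(\theta)$, hence $p_x$, to be bounded away from zero, which follows from continuity of $f$ on the compact interval $[0,T]$ but is not stated). What the paper's version buys is that it avoids the asymptotic analysis you must supply (that $q \to \infty$ and that the improper integral converges), though you do handle both correctly. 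Your justification that $p$ stays strictly positive for all finite forward time, and that $q$ cannot remain bounded, is sound.
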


\begin{proof}
Without loss of generality, consider the portion of $\mathcal{S}$ lying in the region $q > 0$, $p > 0$.  (The $q > 0$, $p < 0$ region follows by reversing time.  The $q < 0$ region then follows from symmetry.)  Note that for any point $x = (q_0, p_x, \theta_x)$ in the set $\mathcal{G}$ described at the beginning of the section, forward images of $x$ under $\phi_t$ will have decreasing values of $p$.  Hence, only the backwards-time image of $x$ need be considered to find a bound.  In that case, the value of $\phi_{-t}^p(q_0, p_x, \theta_x)$ is increasing in $t$, but this rate of increase is bounded above by
$$\sum_{i=1}^n \frac{m_iq_0}{\sqrt{\epsilon + q^2}} = \frac{Mq_0}{\sqrt{\epsilon + q^2}},$$
where $\epsilon > 0$ is the minimum value of all the $r_i$ on $[0,T]$.  Furthermore, since $p_x > 0$, it must reach the $q = 0$ plane before $t = q_0/p_x$.  Hence, the value of $\phi_{-t}^p(q_0, p_x, \theta_x)$ must be finite at the time the trajectory intersects the $q = 0$ plane, as it has a bounded rate of increase over a bounded time.  Since mapping $\mathcal{G}$ to the $q = 0$ plane by $\phi_{-t}$ is continuous, the values of $p$ on the intersection of $\mathcal{S}$ and $\{q = 0\}$ must be bounded.
\end{proof}

It is important to note that the $q > 0$ and $q < 0$ branches of $\mathcal{S}$ need not match up across the $q = 0$ plane.  However, we can guarantee the existence of a few points where the two will match if the planar orbit satisfies certain symmetry properties.

\begin{theorem}
Let $t_0$ be a real number for which all $r_i(t)$ satisfy $r_i(t_0 - t) = r_i(t_0 + t)$.  Then there exists an orbit that escapes to infinity parabolically in both forward and reverse time which passes through the $q = 0$ plane when $\theta = t_0 \text{ mod } T$.
\end{theorem}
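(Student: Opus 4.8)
The plan is to exploit the time-reversal symmetry of the configuration to upgrade a purely forward-parabolic orbit into a bi-parabolic one, and then to use a winding argument on the circle $[0,T]/\!\sim$ to pin its crossing of $\{q=0\}$ at the prescribed phase $\theta=t_0$.

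First I would record two symmetries of the flow. The hypothesis $r_i(t_0-t)=r_i(t_0+t)$ gives $r_i(2t_0-t)=r_i(t)$, so a direct substitution into \ref{flow21}--\ref{flow22} shows that if $(q(t),p(t))$ is a solution then so is $(q(2t_0-t),\,-p(2t_0-t))$ (time reversal about $t_0$); independently, $(-q(t),-p(t))$ is always a solution. Composing the two, $(\hat q,\hat p)(t):=(-q(2t_0-t),\,p(2t_0-t))$ is again a solution. Now suppose $O$ is a forward-parabolic orbit with $q_O(t_0)=0$ and $p_O(t_0)=v>0$. The orbit $\hat O$ built from $O$ agrees with $O$ at $t=t_0$ (both sit at the origin with velocity $v$), so by uniqueness $\hat O=O$; that is, $q_O(t)=-q_O(2t_0-t)$ and $p_O(t)=p_O(2t_0-t)$ for all $t$. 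Sending $t\to+\infty$ in these identities converts forward-parabolic escape ($q_O\to+\infty$, $p_O\to 0^+$) into backward-parabolic escape ($q_O\to-\infty$, $p_O\to 0$ as $t\to-\infty$). Thus any forward-parabolic orbit crossing $\{q=0\}$ exactly at $\theta=t_0\bmod T$ is automatically bi-parabolic, and the theorem reduces to producing such an orbit.

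To produce it, I would use the family $\mathcal{G}=\{(Q_0,f(\theta),\theta)\}$ of forward-parabolic orbits from Section \ref{proofpart2}. In the original coordinates each starts at $(q_0,f(\theta),\theta)$ with $f(\theta)>0$ (a nonpositive starting velocity at $q_0>q_\text{mono}$ cannot escape) and lies in $p>0$ with $p$ decreasing to $0$. Tracing it backward under $\phi_{-t}$, the velocity $p$ increases while $q$ decreases at a rate bounded below, so the orbit meets $\{q=0\}$ transversally (there $\dot q=p=v_\text{cross}>0$) after a finite backward time $t_\text{back}(\theta)$. Transversality together with continuous dependence on initial conditions and continuity of $f$ make $t_\text{back}$ continuous, so the crossing phase $\theta_\text{cross}(\theta)=\theta-t_\text{back}(\theta)\bmod T$ is a continuous self-map of the circle. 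Because $f$ and the $r_i$ are $T$-periodic, the backward orbit from initial phase $\theta+T$ is the one from $\theta$ merely translated, whence $t_\text{back}(\theta+T)=t_\text{back}(\theta)$; on the universal cover the lift $G(\theta)=\theta-t_\text{back}(\theta)$ then satisfies $G(\theta+T)=G(\theta)+T$. Hence $\theta_\text{cross}$ has degree one and is onto, so some $\theta^{*}$ gives $\theta_\text{cross}(\theta^{*})=t_0\bmod T$; its orbit is the desired bi-parabolic orbit crossing $\{q=0\}$ at $\theta=t_0$.

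The main obstacle is the surjectivity of $\theta_\text{cross}$: everything hinges on showing this crossing map sweeps out the entire $\theta$-circle rather than a proper subinterval. The degree-one identity $G(\theta+T)=G(\theta)+T$ is exactly what forces this, and establishing it cleanly requires care that the backward crossing is genuinely transversal and that $t_\text{back}$ inherits $T$-periodicity from the periodicity of $f$ and the $r_i$. The symmetry half of the argument, by contrast, is a short uniqueness computation once the involution $(\hat q,\hat p)$ is written down.
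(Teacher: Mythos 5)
Your proof is correct, and its core is the same as the paper's: take a forward-parabolic orbit crossing $\{q=0\}$ at phase $t_0$ and use the time-reversal symmetry induced by $r_i(t_0-t)=r_i(t_0+t)$, together with uniqueness of solutions, to conclude it is also backward-parabolic. (Incidentally, your derived relation $p(t_0+s)=p(t_0-s)$ is the right one; the paper writes $\phi_{-t}^p=-\phi_t^p$, which at $t=0$ would force $p_0=0$, so that is a sign slip on the paper's side, harmless for the limit.) Where you genuinely go beyond the paper is the existence half: the paper simply posits a point $(0,p_0,\theta_0)$ with $\theta_0=t_0 \bmod T$ whose forward orbit lies in $\mathcal{S}$, i.e.\ it tacitly assumes $\mathcal{S}\cap\{q=0\}$ meets every phase $\theta$. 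You prove this by tracing the family $\mathcal{G}=\{(q_0,f(\theta),\theta)\}$ backward to $\{q=0\}$ (finite backward time and transversal crossing because $f(\theta)>0$ and $p$ increases while $q$ decreases in backward time), obtaining a continuous crossing-phase map whose lift satisfies $G(\theta+T)=G(\theta)+T$, hence a degree-one, surjective circle map. This winding argument is a real addition that closes a gap the paper leaves implicit; equivalently one could note $t_{\text{back}}$ is bounded and $T$-periodic and apply the intermediate value theorem to $\theta-t_{\text{back}}(\theta)$ over one period. The only caveat worth flagging is that if some $r_i$ vanishes (the collision examples), the continuity and transversality claims near a crossing that coincides with a collision time need the regularized coordinates, but the paper's own proof has the same tacit restriction.
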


\begin{proof}
Let $(0, p_0, \theta_0)$ be a point with $\theta_0 = t_0 \text{ mod } T$ such that $\phi_t(0, p_0, \theta_0) \in \mathcal{S}$ some for $t > 0$.  Then $\phi_t(0, p_0, \theta_0) \in \mathcal{S}$ for all $t > 0$ by flow-invariance of $\mathcal{S}$.  Since $r_i(t_0 - t) = r_i(t_0 + t)$, we then have $\phi_{-t}^q(0, p_0, \theta_0) = -\phi_t^q(0, p_0, \theta_0)$ and $\phi_{-t}^p(0, p_0, \theta_0) = -\phi_t^p(0, p_0, \theta_0)$.  Since $\lim_{t \to \infty} \phi_t^q(0, p_0, \theta_0) = \infty$ and $\lim_{t \to \infty} \phi_t^p(0, p_0, \theta_0) = 0$ by construction, then we must have $\lim_{t \to \infty} \phi_{-t}^q(0, p_0, \theta_0) = -\infty$ and $\lim_{t \to \infty} \phi_{-t}^p(0, p_0, \theta_0) = 0$.
\end{proof}

In the case of the circular Sitnikov problem, such a time-reversing symmetry for the functions $r_i$ exists for all $t_0$, and so the two branches of $\mathcal{S}$ will match exactly over $q = 0$.  This is not too surprising, as this problem is completely integrable.  For configurations with up to finitely many symmetries, the regions on which the two branches of $\mathcal{S}$ fail to line up on $q = 0$ give the ``windows'' through which any orbit passing from return to escape must pass.  These will be the subject of the numerical investigations in the following section. \\

\section{Numerical Investigations}\label{numerics}

\subsection{A Numerical Catalyst}\label{preNumerics}

Before presenting any particular numerical worked examples, we present some theory that makes our numerical work simpler.  Since $0 \leq r_i \leq R_i$ for all applicable $i$, then for $q > 0$, we have
$$q^3 \leq (r_i^2 + q^2)^{3/2} \leq (R_i^2 + q^2)^{3/2}.$$
Let $\mathfrak{R}$ be the maximum of $\{R_1, R_2, ... , R_n\}$.  Then
$$q^3 \leq (r_i^2 + q^2)^{3/2} \leq (\mathfrak{R}_i^2 + q^2)^{3/2}.$$
Taking the reciprocal and multiplying by $m_iq$ gives
$$\frac{m_i}{q^2} \geq \frac{m_iq}{(r_i^2 + q^2)^{3/2}} \geq \frac{m_iq}{(\mathfrak{R}^2 + q^2)^{3/2}}.$$
Assuming further that $p > 0$, we then have that
$$\frac{-m_ip}{q^2} \leq \frac{-m_iqp}{(r_i^2 + q^2)^{3/2}} \leq \frac{-m_iqp}{(\mathfrak{R}^2 + q^2)^{3/2}}.$$
Since this holds for arbitrary $i$, it holds in the summation.  We then have
$$\frac{-Mp}{q^2} \leq \sum_{i = 1}^n \frac{-m_iqp}{(r_i^2 + q^2)^{3/2}} \leq \frac{-Mqp}{(\mathfrak{R}^2 + q^2)^{3/2}}.$$
The central quantity here is simply $p\dot{p}$.  All of these expressions can be integrated explicitly with respect to $t$.  Integrating over the interval $[t_a,t_b]$ gives
\begin{equation}
\frac{M}{q(t_b)} - \frac{M}{q(t_a)} \leq \frac{1}{2}p^2(t_b) - \frac{1}{2}p^2(t_a) \leq \frac{M}{\sqrt{\mathfrak{R}^2 + q^2(t_b)}} - \frac{M}{\sqrt{\mathfrak{R}^2 + q^2(t_a)}}.
\label{EInequalities}
\end{equation}

\begin{theorem}
If $q > 0$, $p > 0$, and 
$$E^* \frac{1}{2}p^2(t_a) - \frac{M}{q(t_a)} > 0$$
at some time $t_a$, then $q(t) \to \infty$ as $t \to \infty$.
\label{numCheck1}
\end{theorem}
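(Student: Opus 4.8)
The plan is to build the argument around the left-hand inequality in (\ref{EInequalities}), which I will use to control the energy-like quantity $E(t) = \frac{1}{2}p^2(t) - \frac{M}{q(t)}$. First I would observe that the left inequality rearranges into
$$\frac{1}{2}p^2(t_a) - \frac{M}{q(t_a)} \leq \frac{1}{2}p^2(t_b) - \frac{M}{q(t_b)},$$
which says precisely that $E(t)$ is non-decreasing on any interval $[t_a, t_b]$ over which $q > 0$ and $p > 0$. Since the hypothesis is exactly that $E(t_a) = E^* > 0$, this monotonicity would give $E(t) \geq E^*$, and therefore
$$\frac{1}{2}p^2(t) \geq E^* + \frac{M}{q(t)} > E^* > 0,$$
so that $p(t)^2 \geq 2E^*$ wherever the inequality is valid. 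The point is that $p$ is then bounded away from zero by a fixed positive constant, which will eventually force $q \to \infty$.

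The main obstacle is that inequality (\ref{EInequalities}) was derived only under the standing assumptions $q > 0$ and $p > 0$, so it does not immediately apply for all time; I would resolve this with a continuation (bootstrap) argument. Set $t^* = \sup\{t \geq t_a : p(s) > 0 \text{ for all } s \in [t_a, t]\}$. On the interval $[t_a, t^*)$ we have $p > 0$, hence $\dot{q} = p > 0$ and $q(t) \geq q(t_a) > 0$, so (\ref{EInequalities}) is legitimately applicable and yields $p(t)^2 \geq 2E^*$ there. If $t^*$ were finite, continuity of $p$ together with the definition of $t^*$ would force $p(t^*) = 0$; but $p(t^*)^2 = \lim_{t \to t^*} p(t)^2 \geq 2E^* > 0$, a contradiction. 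Hence $t^* = \infty$, and we conclude $p(t) \geq \sqrt{2E^*}$ for all $t \geq t_a$, with $q$ positive and increasing throughout.

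Finally, since $\dot{q} = p \geq \sqrt{2E^*}$ for every $t \geq t_a$, integrating gives $q(t) \geq q(t_a) + \sqrt{2E^*}\,(t - t_a)$, so $q(t) \to \infty$ as $t \to \infty$, which is the desired conclusion. I expect the only genuinely delicate point to be the continuation step above: everything else is an immediate consequence of the monotonicity of $E$, but one must take care that the sign hypotheses under which (\ref{EInequalities}) holds are never violated, and the bootstrap argument is what guarantees this for all forward time.
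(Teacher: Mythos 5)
Your proof is correct and follows essentially the same route as the paper: rearrange the lower bound in (\ref{EInequalities}) to get $\tfrac{1}{2}p^2(t_b) \geq \tfrac{M}{q(t_b)} + E^*$, conclude that $p$ is uniformly bounded below by a positive constant, and integrate $\dot{q} = p$ to force $q \to \infty$. Your explicit continuation argument for why the sign hypotheses persist for all forward time is a welcome tightening of a step the paper leaves implicit (and you correctly identify the relevant bound as the \emph{left} inequality, which the paper's prose mislabels).
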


\begin{proof}
Re-arranging the right inequality in \ref{EInequalities} gives
$$\frac{1}{2}\left(p(t_b)\right)^2 \geq \frac{M}{q(t_b)} + E^*.$$
Hence, $p$ is bounded below for all time $[t_a, t_b]$.  As $t_b$ was arbitrary and $E^*$ only depends upon conditions at $t_a$, we have that $p$ is bounded below for all time.  Furthermore, $p$ is bounded below uniformly by the value of $E^*$.  Hence, $q$ is increasing with its derivative bounded away from zero, so we must have $q \to \infty$ as $t \to \infty$.
\end{proof}

It is worth noting that in the inverted coordinates, the conditions of Theorem \ref{numCheck1} become precisely those of Lemma \ref{captureconds}.  The geometric proof presented following Lemma \ref{captureconds} was useful for many of the details in Section \ref{ProofMain}.  On the other hand, the proof of Theorem \ref{numCheck1} can be more readily interpreted in terms of positions and velocities, giving a better physical intuition. \\

On the other hand, we also have

\begin{theorem}
If $q > 0$, $p > 0$, and 
$$E_* = \frac{1}{2}p^2(t_a) - \frac{M}{\sqrt{\mathfrak{R}^2 + q^2(t_a)}} < 0$$
at some time $t_a$, then there is some future time $t_b$ where $p(t_b) = 0$ and $q(t_b) < \infty$.
\label{numCheck2}
\end{theorem}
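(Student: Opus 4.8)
The plan is to mirror the proof of Theorem \ref{numCheck1}, but to exploit the \emph{right} inequality in \ref{EInequalities} rather than the left. Re-arranging that inequality and substituting the definition of $E_*$ gives, for every $t_b \geq t_a$ for which $p$ has remained positive,
$$\frac{1}{2}p^2(t_b) \leq \frac{M}{\sqrt{\mathfrak{R}^2 + q^2(t_b)}} + E_*.$$
Since the left-hand side is non-negative while $E_* < 0$, the first term on the right must dominate $-E_*$; solving the resulting inequality $M/\sqrt{\mathfrak{R}^2 + q^2(t_b)} \geq -E_*$ yields an explicit upper bound $q(t_b) \leq q_{\max}$, where $q_{\max} = \sqrt{M^2/E_*^2 - \mathfrak{R}^2}$. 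A short check using $\frac{1}{2}p^2(t_a) \geq 0$ and $q(t_a) > 0$ confirms that $-E_* < M/\mathfrak{R}$, so $q_{\max}$ is a finite real number strictly exceeding $q(t_a)$.

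Next I would argue by contradiction. Suppose $p(t) > 0$ for all $t > t_a$. Then $\dot{q} = p > 0$ by \ref{flow21}, so $q$ is strictly increasing; combined with the bound above, $q(t)$ is confined to the compact interval $[q(t_a), q_{\max}]$ of strictly positive values for all time. On such an interval the continuous function $q \mapsto Mq/(\mathfrak{R}^2 + q^2)^{3/2}$ is strictly positive and hence attains a positive minimum $c$. Using $r_i \leq \mathfrak{R}$ termwise in \ref{flow22} then gives
$$\dot{p} \leq -\frac{Mq}{(\mathfrak{R}^2 + q^2)^{3/2}} \leq -c < 0 \quad \text{for all } t > t_a.$$

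A rate of descent bounded away from zero is incompatible with $p$ remaining positive: integrating the inequality above gives $p(t) \leq p(t_a) - c(t - t_a)$, which becomes negative once $t$ is sufficiently large. This contradicts the standing assumption, so $p$ must vanish at some finite time. Taking $t_b$ to be the first such time completes the argument, and the $q$-bound above guarantees $q(t_b) \leq q_{\max} < \infty$.

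I expect the main obstacle to be the last step's distinction between $p$ merely \emph{converging} to zero and $p$ actually \emph{reaching} zero in finite time. The energy bound alone only confines $q$ and forces $p \to 0$ along any non-returning orbit; what rules out a purely asymptotic approach is precisely the observation that $q$ stays bounded away from both $0$ and $\infty$, which keeps the gravitational deceleration uniformly positive and hence drives $p$ strictly below zero in bounded time. Some care is also needed to verify that $q_{\max}$ is genuinely finite and larger than $q(t_a)$, which relies on the strict inequality $E_* < 0$ together with the hypothesis $q(t_a) > 0$.
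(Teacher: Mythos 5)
Your proof is correct and follows essentially the same route as the paper: the energy inequality \ref{EInequalities} confines $q$ to a finite interval while $p>0$, and the resulting uniform lower bound on the gravitational deceleration forces $p$ to vanish in finite time. You are in fact slightly more careful than the paper on two points --- you work with the right-hand inequality of \ref{EInequalities}, which is what the paper's algebra actually requires despite its reference to the ``left inequality,'' and you note explicitly that $q$ stays bounded away from $0$ (because it is increasing), which is needed to get the deceleration bound $-c<0$.
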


\begin{proof}
Rearranging the left inequality in \ref{EInequalities}, we have
$$\frac{1}{2}p^2(t_a) \leq \frac{M}{\sqrt{\mathfrak{R}^2 + q^2(t_b)}} + E_*.$$
Since the term on the left must be positive, the term on the right must be as well.  This gives
$$-E_* \leq \frac{M}{\sqrt{\mathfrak{R}^2 + q^2(t_b)}}.$$
(Note that both sides here are positive.)  Rearranging the equation then gives
$$q^2(t_b) \leq \left(\frac{M}{E_*}\right)^2 - \mathfrak{R}^2.$$
Since both numbers on the right are finite, we must have that $q^2$ is bounded, and so $q$ is bounded as long as $q$ and $p$ are both positive.  Then $\ddot{q}$ is negative and bounded away from zero on that same time interval, and so it must be the case that $\dot{q} = p = 0$ at some future time $t_b$.
\end{proof}

It is worth noting that if the inverted coordinates are used, the set $E_* = 0$ becomes the equation
$$P = \frac{\sqrt{2M}Q}{\sqrt[4]{R^2Q^4 + 1}}.$$
As $Q \to 0^+$, the derivative $dP/dQ$ approaches $\sqrt{2M}$, which is the same as the boundary of the forward-invariant region given in Lemma \ref{captureconds}.  Since the invariant surface $\mathcal{S}$ must lie between the two, we can describe the linearized behavior of $\mathcal{S}$ near $Q = 0$--namely, $\mathcal{S}$ is locally approximated by $P = \sqrt{2M}Q$, independent of $\theta$.  Hence, we have a first-order approximation for a stable manifold of a degenerate fixed point (in the two-variable time-dependent system), similar to the work done by McGehee in \cite{bibMcGehee1}. \\

Theorems \ref{numCheck1} and \ref{numCheck2} give readily verifiable conditions on $q$ and $p$ that determine whether the massless particle escapes to infinity or has a future point at which it again passes through the origin.  The set of points in $(q,p,\theta)$ that satisfy these inequalities are not complements of each other in $\mathbb{R}^3$, and the separating surface $\mathcal{S}$ must lie between them.  It is also worth noting that these sets of points are merely forward-invariant under $\phi_t$ -- there are points lying outside of these sets that eventually enter them under $\phi_t$.  This is helpful numerically, as we can obtain estimates of the intersection of $\mathcal{S}$ with the plane $q = q_0 \geq q_\text{mono}$, and hence find the value of $f(\theta)$, by fixing a value of $\theta$ and integrating initial conditions for various estimated values of $p$.  Depending on which region they enter, we can adjust our guess upward or downward in a standard fashion (for instance, using the bisection method).  This is very readily accomplished numerically in the inverted coordinate frame, as the intervals in which $Q$, $P$, and $\theta$ lie are all of finite length.  Lastly, as described in Section \ref{finishMain}, we can integrate to find the intersection of the image of this curve under $\phi_{-t}$ with the plane $q = 0$.  These images can reveal some other interesting possible behaviors of the orbit of the massless particle.\\

\subsection{A Non-circular Kepler Configuration}\label{example1}
As a first example, we consider the classical Sitnikov problem with a non-circular Kepler orbit in the plane.  Two bodies of mass 1 are initially placed at $(\pm 1, 0)$ with initial velocity $(0,\pm 1)$.  This results in the intersecting ellipses shown in Figure \ref{figKeplerBase}.

\begin{center}
\begin{figure}[h]
\includegraphics[scale=.5]{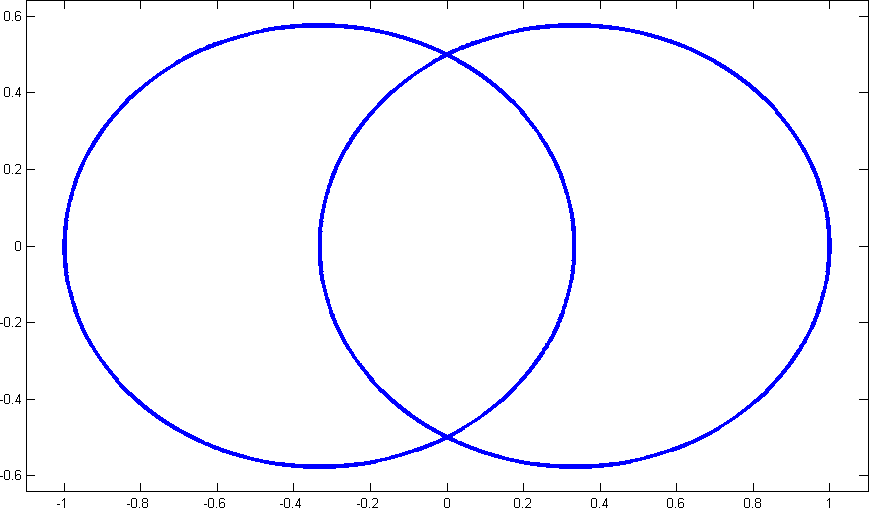}
\caption{The planar two-body problem for Section \ref{example1} in the $(x,y)$ plane.}
\label{figKeplerBase}
\end{figure}
\end{center}

This orbit has period $T \approx 2.4183$.  The functions $r_1(t)$ and $r_2(t)$ satisfy $r_1(t) = r_2(t)$ for all $t$. Further, with the initial conditions at the maximum distance from the origin, we have $r_1(T-t) = r_1(t)$.  A similar symmetry $r_1(T/2 + t) = r_1(T/2 - t)$ exists at the point where the two bodies reach their minimum distance from the origin. \\

As $R_1 = R_2 = 1$ for this orbit, we choose the value of $q_0 = Q_0 = 1$ for convenience.  Using the procedure described in Section \ref{preNumerics} and the standard Runge-Kutta integration, we find the value of $P = f(\theta)$ for an evenly-spaced grid of points in the interval $\theta \in [0, T]$.  The results of integrating these points back to the $q = 0$ plane are shown in Figure \ref{figKepler1}.  It is worth noting that the peak value for parabolic escape occurs just before $t = T/2$.  This is expected, as the gravitational pull along the $z$-axis of the planar orbit cannot be maximized at $T/2$ if $q = 0$.

\begin{center}
\begin{figure}[h]
\includegraphics[scale=.65]{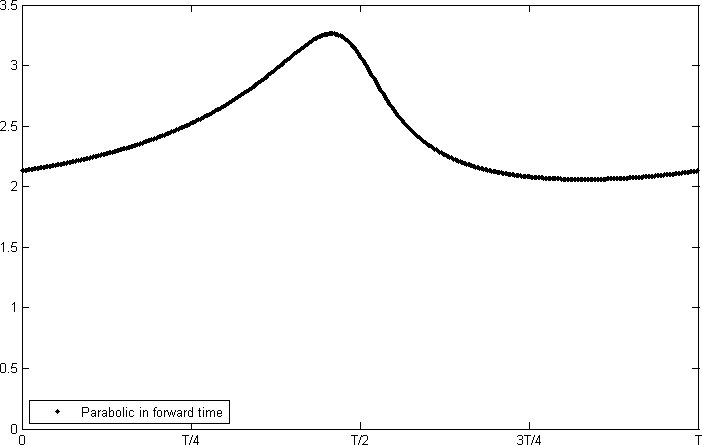}
\caption{The limit of $\mathcal{S}$ on the $q = 0$ plane approaching from $q > 0$.  Here, $\theta$ is plotted on the horizontal axis, and $p$ is plotted on the vertical.}
\label{figKepler1}
\end{figure}
\end{center}

Let $\mathcal{S}_0^+$ denote the set of points shown in Figure \ref{figKepler1}.  Owing to the time-reversing symmetry, we can also find the orbits which achieve parabolic escape in reverse time by reflecting the set $\mathcal{S}_0^+$ across the line $t = T$.  Denote the resulting set $\mathcal{S}_0^-$.  This is shown in Figure \ref{figKepler2}. 

\begin{center}
\begin{figure}[h]
\includegraphics[scale=.65]{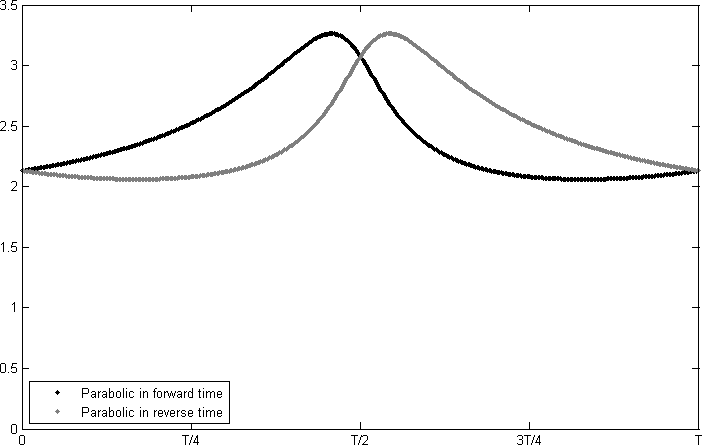}
\caption{Figure \ref{figKepler1} (darker), with the reverse-time parabolic escape orbit curve overlaid (lighter).}
\label{figKepler2}
\end{figure}
\end{center}

The set of points in the region where $\mathcal{S}_0^+$ lies below $\mathcal{S}_0^-$ $(T/2 < t < T)$ denotes initial conditions with $q = 0$ for which an orbit escapes hyperbolically in reverse time but returns at least once to the $q = 0$ plane in forward time.  Similarly, points lying below $\mathcal{S}_0^+$ but above $\mathcal{S}_0^-$ $(0 < t < T/2)$ escape hyperbolically in forward time, but eventually return to $q = 0$ in reverse time.  Points on either curve that lie above the other escape parabolically in either forward or reverse time, and hyperbolically in the other.  The intersections of the two curves, which occur at $t = 0$ and $t = T/2$, correspond to the orbits which escape parabolically in both forward and reverse time. \\

We can observe more complicated behavior by integrating points that lie below the $\mathcal{S}_0^+$ curve and observing their next intersection with the $q = 0$ plane.  Numerically, this can be slightly problematic, as behaviors near $q = \infty$ or $Q = 0$ involve high-order powers of very small terms.  However, certain points far below $\mathcal{S}_0^+$ present no such problem.  We show the results of one such integration in Figure \ref{figKepler3}.  (It is worth noting that, strictly speaking, the starred points should have $p < 0$.  However, since $\phi_t$ is symmetric with respect to  $(q,p,\theta) \mapsto (-q, -p, \theta)$, an image similar to Figure \ref{figKepler2} exists in the $q = 0$, $p < 0$ half-plane, and so we may consider the starred points in Figure \ref{figKepler3} as the image of the corresponding points with $p < 0$.)\\

\begin{center}
\begin{figure}[h]
\includegraphics[scale=.65]{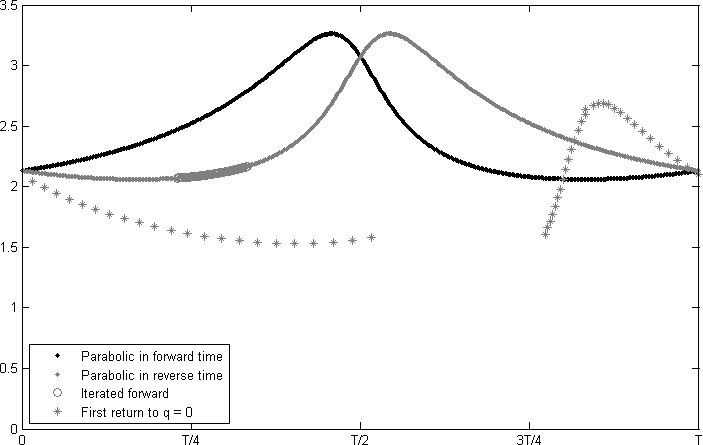}
\caption{The indicated set of points on $\mathcal{S}_0^-$ integrated until they return to the $q = 0$ plane.  The image under $\phi_t$ is denoted by the starred points.}
\label{figKepler3}
\end{figure}
\end{center}

An interesting behavior is observed at the intersection of the curve of starred points and $\mathcal{S}_0^+$.  This is another orbit that escapes parabolically in both forward and reverse time, but the number of crossings through $q = 0$ varies.  In this case, the forward-time orbit does not return to $q = 0$, but the reverse time orbit returns exactly once.  Hence, this orbit connects a parabolic escape orbit at $q = \infty$ to a parabolic escape orbit at $q = \infty$ that passes through $q = 0$ exactly twice.  In the $p < 0$ portion of the plane, the same phenomenon occurs, with an orbit connecting $q = -\infty$ to itself. \\

Again, due to the time-reversing symmetry, we obtain for free the result of the reverse-time image of the corresponding points on $\mathcal{S}_0^+$.  The results are shown in Figure \ref{figKepler4}.  Here, again, a new phenomenon occurs, with the intersection of the two starred curves, near the point $(T/2, 3/2)$.  This point crosses $q = 0$ exactly once in  forward and backward time, and then escapes parabolically.  Hence, there is a parabolic orbit connecting $q = -\infty$ to $q = \infty$ and which passes through $q = 0$ exactly thrice.   \\

\begin{center}
\begin{figure}[h]
\includegraphics[scale=.65]{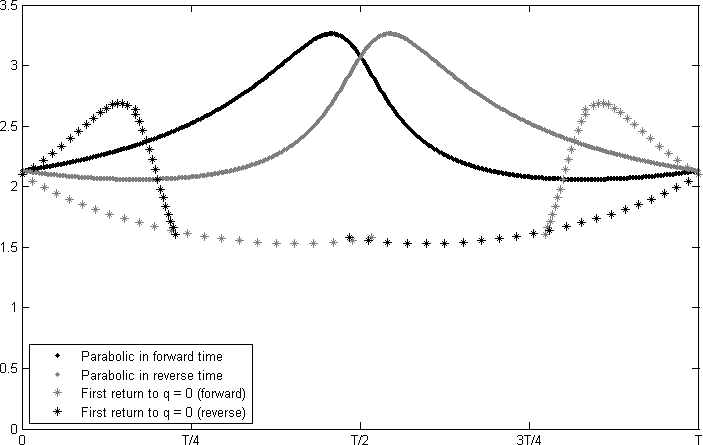}
\caption{The symmetric image showing forward-time images of some points in $\mathcal{S}_0^-$ and reverse-time images of some points in $\mathcal{S}_0^+$.}
\label{figKepler4}
\end{figure}
\end{center}

\subsection{A Configuration with Collision Singularities away from the Origin}\label{example2}

In this section, we consider the planar orbit discussed in a series of papers (\cite{bibOYS}, \cite{bibBORSY}, \cite{bibBOYS1} and \cite{bibBOYS2}, see also \cite{bibBMS}) that features simultaneous binary collisions away from the origin.  Four bodies, each of mass $m = 1$, initially lie on the coordinate axes.  Their initial velocities are perpendicular to the coordinate axes and equal in magnitude, leading to collisions as shown in Figure \ref{figSBCBase}. It is shown in \cite{bibOYS} that the orbit exists as pictured, and is symmetric through rotation through the angle $\pi$.  Moreover, the collisions in the orbit are regularizable.  The regularization of the collisions of the four bodies involves changes in spatial  coordinates, as well as a time change of the form $d\hat{t}/dt = u(x,y)$.  The net effect of all of these changes is that the velocities of the four bodies is finite in the new coordinate frame, so the orbit may be continued past collision.  (A demonstration of regularization is given in Appendix \ref{rhombAppendix}.)

\begin{center}
\begin{figure}[h]
\includegraphics[scale=.65]{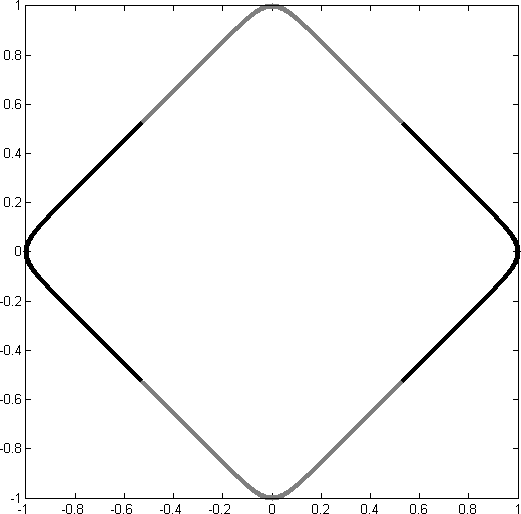}
\caption{The simultaneous binary collision orbit featured in Section \ref{example2}.  At all times, the four bodies lie at positions $(x,y)$, $(y,x)$, $(-x,-y)$, and $(-y,-x)$.  Collisions occur along the lines $y = x$ and $y = -x$.}
\label{figSBCBase}
\end{figure}
\end{center}

We may adapt the motion of the massless particle by performing the same time change on $q$ and $p$, namely:
$$\dot{\hat{q}} = \frac{d\hat{t}}{dt}\dot{q}, \quad \dot{\hat{p}} = \frac{d\hat{t}}{dt}\dot{p}.$$
Similar transformations can be made to obtain inverted coordinates $\dot{\hat{Q}}$ and $\dot{\hat{P}}$.  Then, replacing the time variable $t$ by the fictional time variable $\tau$, we obtain an equivalent system without singularities for the planar configuration and the massless particle.  Specifically, the curves traced in $\mathbb{R}^4$ by $(x(t), y(t), q(t), p(t))$ and $(x(\hat{t}), y(\hat{t}), q(\hat{t}), p(\hat{t})$ are identical as sets of points, and differ only in the parameterization.  Hence, we can perform our analysis in the regularized setting without any difficulty. \\

Placing the planar bodies at $(\pm 1, 0)$ and $(0, \pm 1)$ with appropriate initial velocities yields a planar orbit with period $\hat{t} \approx 6.4848$.  Moreover, by the symmetry shown in Figure {figSBCBase}, we have that $r_1 = r_2 = r_3 = r_4$ for all time.  For the first half of the period, we have all four bodies lying in the first and third quadrants, returning to the coordinate axes at the end of the interval.  Then, over the second half, the four bodies lie in the second and fourth coordinates, repeating the same behavior up to reflection across either coordinate axis.  Hence, each of the functions $r_i$ is periodic with period $T \approx 3.2424,$ as reflection across the coordinate axes does not change radial distance.  \\

Figure \ref{figSBC1} shows the result of repeating the numerical work as in Section \ref{example1}.  The central gap corresponds to the interval of time containing the collision.  At this time, the value of $d\hat{t}/dt$ approaches zero, lengthening a momentary $t$-interval to a longer $\hat{t}$-interval.  It is also important to note that the vertical scale is quite small, so the two curves are actually quite close to each other.  For this reason, performing further numerical integration to obtain a figure similar to Figure \ref{figKepler4} is problematic. \\

\begin{center}
\begin{figure}[h]
\includegraphics[scale=.5]{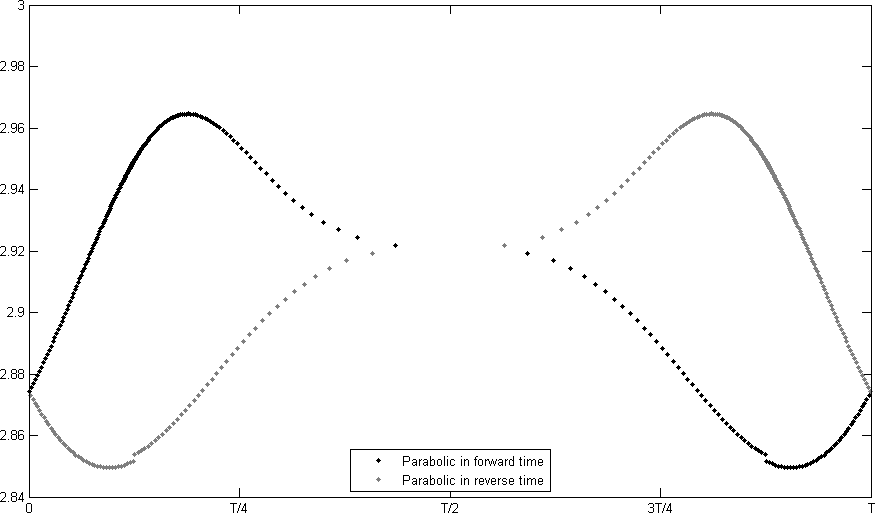}
\caption{The $q = 0$ image corresponding to Figure \ref{figKepler2} for the orbit of Section \ref{example2}.  Fictionalized time $\hat{t}$ lies on the horizontal axis, with $p$ on the vertical.  $T \approx 3.2424$.}
\label{figSBC1}
\end{figure}
\end{center}

\subsection{The $e = 1$ Sitnikov Orbit, or the Restricted Rhomboidal Problem}\label{example3}

As mentioned in the introduction, this work was the result of studying the Rhomboidal configuration as one pair of masses approaches $m = 0$.  When $m = 0$, we have a collinear two-body configuration with collisions, and a pair of massless particles that are symmetric across the collinear configuration.  Since the two massless particles have no influence over each other, removing one does not change the overall dynamics.  In this setting, we have the Sitnikov problem with $e = 1$.  For this, the solution of the planar orbit can be given explicitly, in both regularized and standard coordinates.  In regularized time, we have that
$$r_1(\hat{t}) = r_2(\hat{t}) = \left|\frac{\sqrt{2}}{2}\sin\left(\frac{\hat{t}}{2}\right)\right|.$$
(A proof of this is included as Appendix \ref{rhombAppendix}).  This is $2\pi$-periodic in $\hat{t}$.  Since $r_i = 0$ for certain values of $\hat{t}$, Theorem \ref{boundedp} does not apply.  We expect to see asymptotic ``spikes'' in the $q = 0$ plane corresponding to the times at which $r_i = 0$.  Carrying out the same numerical studies as previous sections gives the result shown in Figure \ref{figRhomb1}.

\begin{center}
\begin{figure}[h]
\includegraphics[scale=.5]{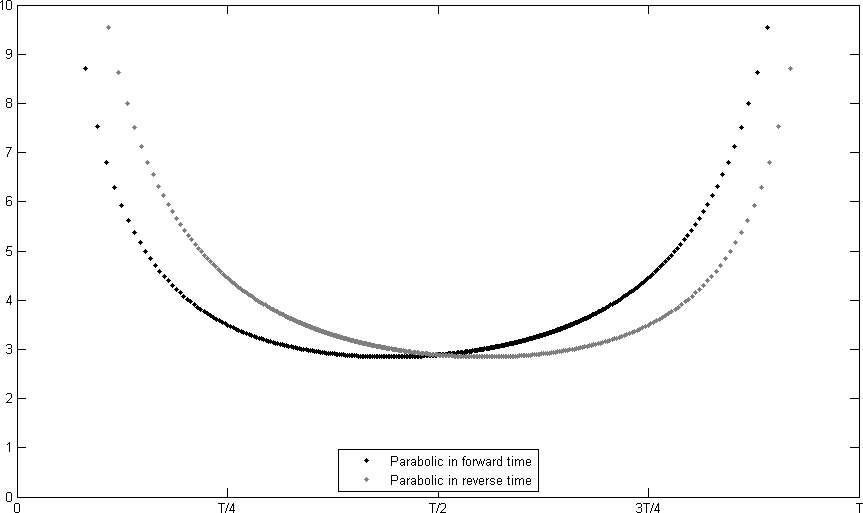}
\caption{The $q = 0$ image corresponding to Figure \ref{figKepler2} for the orbit of Section \ref{example3}.  Fictionalized time $\hat{t}$ lies on the horizontal axis, with $p$ on the vertical.  $T = 2\pi$.}
\label{figRhomb1}
\end{figure}
\end{center}

We can again produce the first-return map for the points corresponding to reverse-time parabolic orbit lying in $\theta > T/2$.  In this instance, it is numerically feasible to perform this calculation for nearly all such points.  Prior to reduction mod $T$, asymptotic spikes appear at values of $\theta = nT$ for integer values of $n$.  These are visible in Figure \ref{figRhomb2}.

\begin{center}
\begin{figure}[h]
\includegraphics[scale=.5]{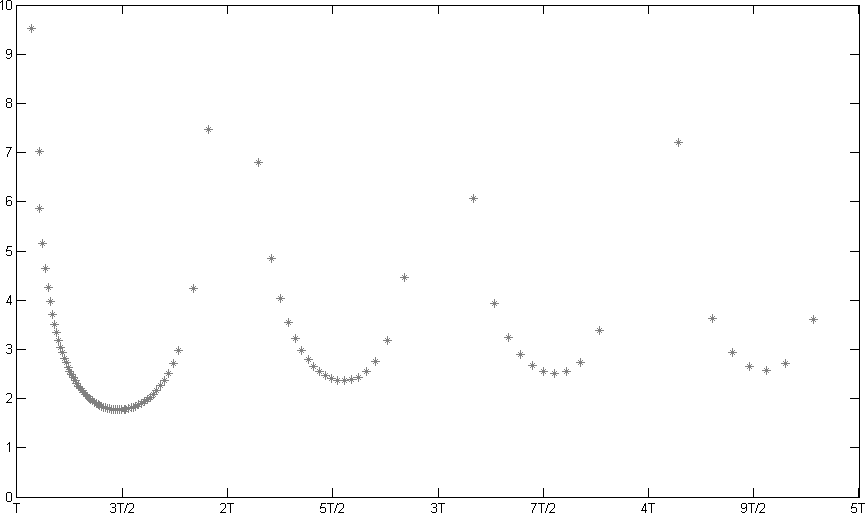}
\caption{The $q = 0$ image corresponding to forward iteration of the ``parabolic in reverse time'' points lying below the ``parabolic in forward time'' curve, prior to reduction mod $T$.  Fictionalized time $\hat{t}$ lies on the horizontal axis, with $p$ on the vertical.  $T = 2\pi$.}
\label{figRhomb2}
\end{figure}
\end{center}

If we choose the interval $\theta \in [T,2T]$ and overlay the corresponding points with Figure \ref{figRhomb1} (along with their reverse-time counterparts), we obtain the result shown in Figure \ref{figRhomb3}.  We could, of course, overlay any of the intervals $[nT, (n+1)T]$ and get a similar picture.  In this case, the intersection of the return curves would correspond to orbits which escape parabolically in forward and reverse time, and cross through the $q = 0$ plane three times.  By appropriate choice of $n$, we could specify an arbitrary number of periods through which the planar orbit passes between each of these three returns.

\begin{center}
\begin{figure}[h]
\includegraphics[scale=.5]{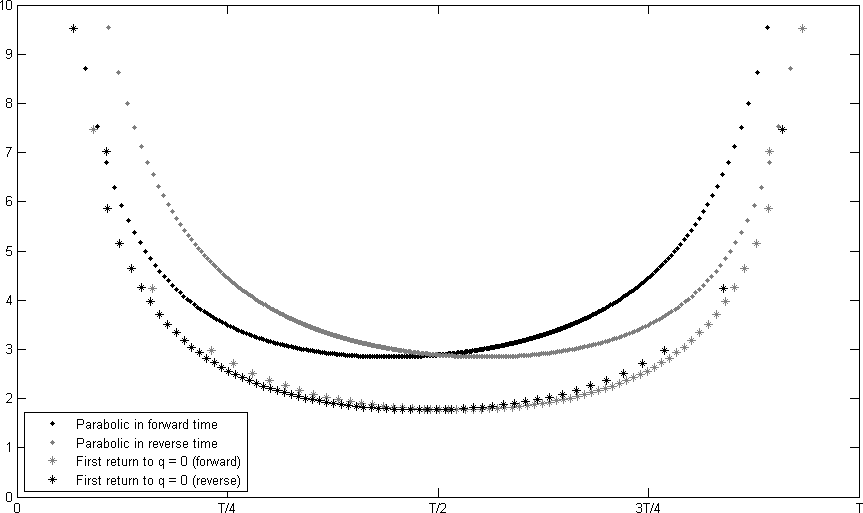}
\caption{Analogue of Figure \ref{figKepler4} for the orbit of Section \ref{example3}.  $T = 2\pi$.}
\label{figRhomb3}
\end{figure}
\end{center}

We could continue in similar fashion to obtain connecting parabolic orbits passing through $q = 0$ arbitrarily many times, with any finite integer sequence of periods completed between returns.  It may be the case that the return times do not occur at exact integer multiples of the period, but counting only completed periods (e.g. counting collisions in the case of Sections \ref{example2} and \ref{example3}) provides a more precise interpretation.

\section{Concluding Remarks}\label{concluding}
\subsection{Future Research}
There are a few questions that we believe warrant further exploration, which we have chosen not to address in this paper.  The foremost is the following:
\begin{question}
Is $\mathcal{S}$ anything more than $C^0$?  To what degree is this connected to the functions $r_i$?
\end{question}
For example, the functions $r_i$ of Sections \ref{example2} and \ref{example3} are merely continuous, with cusps at each collision time, whereas those of Section \ref{example1} are at least $C^\infty$.  Intuitively, we would expect this to cause a difference in the resulting surfaces $\mathcal{S}$ corresponding to each orbit.

\begin{question}
Do the symbolic dynamics discussed in Section \ref{example3} hold for all orbits?
\end{question}
We would expect that near the points in the $q = 0$ plane where forward- and reverse-time parabolic orbits intersect that there are return orbits with arbitrarily long return times.  Since the planar orbits are periodic, this gives that an arbitrary number of periods can occur before the massless particle returns to the origin.  The precise details of how these return regions overlap, however, will require much more work.


\appendix
\section{Regularization of Collisions in the 2-Body Collinear Problem} \label{rhombAppendix}

Consider the two-body collinear configuration with two bodies of mass $1$ located at $(\pm x, 0)$.  Their conjugate momenta is given by $x = 2\dot{y}$.  The Hamiltonian in this setting is then given by
$$H = \frac{1}{4}y^2 - \frac{1}{2x}.$$
Collisions at the origin can be regularized via
$$x = X^2, \quad Y = 2xy.$$
Solving for $y$ and substituting yields
\begin{align*}
\tilde{H} &= \frac{1}{4}\left(\frac{Y^2}{4X^2}\right) - \frac{1}{2X^2} \\
&= \frac{Y^2}{16X^2} - \frac{1}{2X^2}
\end{align*}
Setting $dt/ds = Q_1^2$, the final regularized Hamiltonian in extended phase space is given by
\begin{align*}
\Gamma &= \frac{dt}{ds}(\tilde{H} - E) \\
&= X^2\left(\frac{Y^2}{16X^2} - \frac{1}{2X^2} \right) \\
&= \frac{Y^2}{16} - \frac{1}{2} - X^2E.
\end{align*}
At collision ($X = 0$), assuming $\Gamma = 0$, we find $Y = \sqrt{8}$.  The collision has therefore been regularized as claimed.\\

The equations of motion for this system can be solved explicitly.  We find
$$\dot{X} = \frac{\partial \Gamma}{\partial Y} = \frac{Y}{8},  \qquad \dot{Y} = -\frac{\partial \Gamma}{\partial X} = 2XE.$$
Differentiating the expression for $\dot{Q_1}$ and substituting gives
$$\ddot{X} = \frac{\dot{Y}}{8} = \frac{YE}{4}.$$
Assuming $E = -1$, the differential equation
$$\ddot{X} + \frac{X}{4} = 0$$
has a solution of the form
$$X(s) = c_1\cos(s/2) + c_2\sin(s/2).$$
The corresponding solution for $Y(s)$ is then
$$Y(s) = -4c_1\sin(s/2) + 4c_2\cos(s/2).$$

Assuming that the bodies start at collision ($X(0) = 0$), we find that $c_1 = 0.$  At that time, $Y(0) = \sqrt{8}$, and so $c_2 = \sqrt{2}/2$. Then dynamically significant events for the two primaries occur at $s = n\pi$.  In particular, collisions occur when $n$ is even, and maximum displacement occurs when $n$ is odd.\\

For more complicated orbits (such as those of Section \ref{example2}, additional coordinate transformations may need to take place before the regularization can be completed.  We refer the reader to the papers listed in that section for complete details.

\newpage

\bibliographystyle{plain}
\bibliography{Separation}

\begin{thebibliography}{10}

\bibitem{Alekseev3}
V.~M. Alekseev.
\newblock Quasirandom dynamical systems. {I}. {Q}uasirandom diffeomorphisms.
\newblock {\em Mat. Sb. (N.S.)}, 76 (118):72--134, 1968.

\bibitem{Alekseev1}
V.~M. Alekseev.
\newblock Quasirandom dynamical systems. {II}. {O}ne-dimensional nonlinear
  vibrations in a periodically perturbed field.
\newblock {\em Mat. Sb. (N.S.)}, 77 (119):545--601, 1968.

\bibitem{Alekseev2}
V.~M. Alekseev.
\newblock Quasirandom dynamical systems. {III}. {Q}uasirandom vibrations of
  one-dimensional oscillators.
\newblock {\em Mat. Sb. (N.S.)}, 78 (120):3--50, 1969.

\bibitem{bibBS1}
Lennard Bakker and Skyler Simmons.
\newblock Stability of the rhomboidal symmetric-mass orbit.
\newblock {\em To appear in Disc. Cont. Dyn. Sys. A}, 35(1), 2015.

\bibitem{bibBMS}
Lennard~F. Bakker, Scott Mancuso, and Skyler~C. Simmons.
\newblock Linear stability for some symmetric periodic simultaneous binary
  collision orbits in the planar pairwise symmetric four-body problem.
\newblock {\em J. Math. Anal. Appl.}, 392(2):136--147, 2012.

\bibitem{bibBOYS1}
Lennard~F. Bakker, Tiancheng Ouyang, Duokui Yan, and Skyler Simmons.
\newblock Existence and stability of symmetric periodic simultaneous binary
  collision orbits in the planar pairwise symmetric four-body problem.
\newblock {\em Celestial Mech. Dynam. Astronom.}, 110(3):271--290, 2011.

\bibitem{bibBOYS2}
Lennard~F. Bakker, Tiancheng Ouyang, Duokui Yan, and Skyler Simmons.
\newblock Erratum to: {E}xistence and stability of symmetric periodic
  simultaneous binary collision orbits in the planar pairwise symmetric
  four-body problem [mr2821623].
\newblock {\em Celestial Mech. Dynam. Astronom.}, 112(4):459--460, 2012.

\bibitem{bibBORSY}
Lennard~F. Bakker, Tiancheng Ouyang, Duokui Yan, Skyler Simmons, and Gareth~E.
  Roberts.
\newblock Linear stability for some symmetric periodic simultaneous binary
  collision orbits in the four-body problem.
\newblock {\em Celestial Mech. Dynam. Astronom.}, 108(2):147--164, 2010.

\bibitem{bibFaruque1}
S.~B. Faruque.
\newblock Solution of the {S}itnikov problem.
\newblock {\em Celestial Mech. Dynam. Astronom.}, 87(4):353--369, 2003.

\bibitem{bibHagel1}
J.~Hagel.
\newblock A new analytic approach to the {S}itnikov problem.
\newblock {\em Celestial Mech. Dynam. Astronom.}, 53(3):267--292, 1992.

\bibitem{bibHagelLhotka1}
Johannes Hagel and Christoph Lhotka.
\newblock A high order perturbation analysis of the {S}itnikov problem.
\newblock {\em Celestial Mech. Dynam. Astronom.}, 93(1-4):201--228, 2005.

\bibitem{bibJimenezEscalona1}
Lidia Jim{\'e}nez-Lara and Adolfo Escalona-Buend{\'{\i}}a.
\newblock Symmetries and bifurcations in the {S}itnikov problem.
\newblock {\em Celestial Mech. Dynam. Astronom.}, 79(2):97--117, 2001.

\bibitem{bibLiuSun1}
Jie Liu and Yi~Sui Sun.
\newblock On the {S}itnikov problem.
\newblock {\em Celestial Mech. Dynam. Astronom.}, 49(3):285--302, 1990.

\bibitem{bibLlibreOrtega1}
Jaume Llibre and Rafael Ortega.
\newblock On the families of periodic orbits of the {S}itnikov problem.
\newblock {\em SIAM J. Appl. Dyn. Syst.}, 7(2):561--576, 2008.

\bibitem{bibMacMillan1}
W.~MacMillan.
\newblock An integrable case in the restricted problem of three bodies.
\newblock {\em Astron. J.}, 27:11--13, 1913.

\bibitem{bibMarchesinCastilho1}
Marcelo Marchesin and C{\'e}sar Castilho.
\newblock Subharmonic solutions in the {S}itnikov problem.
\newblock {\em Qual. Theory Dyn. Syst.}, 7(1):213--226, 2008.

\bibitem{bibMarchesinVidal1}
Marcelo Marchesin and Claudio Vidal.
\newblock Spatial restricted rhomboidal five-body problem and horizontal
  stability of its periodic solutions.
\newblock {\em Celestial Mech. Dynam. Astronom.}, 115(3):261--279, 2013.

\bibitem{bibMartinez}
Regina Mart{\'{\i}}nez.
\newblock On the existence of doubly symmetric ``{S}chubart-like'' periodic
  orbits.
\newblock {\em Discrete Contin. Dyn. Syst. Ser. B}, 17(3):943--975, 2012.

\bibitem{bibMcGehee1}
Richard McGehee.
\newblock A stable manifold theorem for degenerate fixed points with
  applications to celestial mechanics.
\newblock {\em J. Differential Equations}, 14:70--88, 1973.

\bibitem{Moser1}
J{\"u}rgen Moser.
\newblock {\em Stable and random motions in dynamical systems}.
\newblock Princeton University Press, Princeton, N. J.; University of Tokyo
  Press, Tokyo, 1973.
\newblock With special emphasis on celestial mechanics, Hermann Weyl Lectures,
  the Institute for Advanced Study, Princeton, N. J, Annals of Mathematics
  Studies, No. 77.

\bibitem{bibOrtegaRivera1}
Rafael Ortega and Andr{\'e}s Rivera.
\newblock Global bifurcations from the center of mass in the {S}itnikov
  problem.
\newblock {\em Discrete Contin. Dyn. Syst. Ser. B}, 14(2):719--732, 2010.

\bibitem{bibOYS}
Tiancheng Ouyang, Duokui Yan, and Skyler Simmons.
\newblock Periodic solutions with singularities in two dimensions in the
  $n$-body problem.
\newblock {\em Rocky Mtn. J. Math.}, 42(4):1601--1614, 2012.

\bibitem{bibPerdiosMarkellos}
E.~Perdios and V.~V. Markellos.
\newblock Stability and bifurcations of {S}itnikov motions.
\newblock {\em Celestial Mech.}, 42(1-4):187--200, 1987/88.

\bibitem{bibPerdios1}
E.~A. Perdios.
\newblock The manifolds of families of 3{D} periodic orbits associated to
  {S}itnikov motions in the restricted three-body problem.
\newblock {\em Celestial Mech. Dynam. Astronom.}, 99(2):85--104, 2007.

\bibitem{bibRivera1}
Andr{\'e}s Rivera.
\newblock Periodic solutions in the generalized {S}itnikov {$(N+1)$}-body
  problem.
\newblock {\em SIAM J. Appl. Dyn. Syst.}, 12(3):1515--1540, 2013.

\bibitem{bibShib1}
Mitsuru Shibayama.
\newblock Minimizing periodic orbits with regularizable collisions in the
  {$n$}-body problem.
\newblock {\em Arch. Ration. Mech. Anal.}, 199(3):821--841, 2011.

\bibitem{bibSidorenko1}
Vladislav~V. Sidorenko.
\newblock On the circular {S}itnikov problem: the alternation of stability and
  instability in the family of vertical motions.
\newblock {\em Celestial Mech. Dynam. Astronom.}, 109(4):367--384, 2011.

\bibitem{bibSitnikov1}
K.~Sitnikov.
\newblock The existence of oscillatory motions in the three-body problems.
\newblock {\em Soviet Physics. Dokl.}, 5:647--650, 1960.

\bibitem{bibSoulisPapadakisBountis}
P.~S. Soulis, K.~E. Papadakis, and T.~Bountis.
\newblock Periodic orbits and bifurcations in the {S}itnikov four-body problem.
\newblock {\em Celestial Mech. Dynam. Astronom.}, 100(4):251--266, 2008.

\bibitem{bibWaldvogel1}
J\"org Waldvogel.
\newblock The rhomboidal symmetric four-body problem.
\newblock {\em Celestial Mech. Dynam. Astronom.}, 113(1):113--123, 2012.

\bibitem{bibYan1}
Duokui Yan.
\newblock Existence and linear stability of the rhomboidal periodic orbit in
  the planar equal mass four-body problem.
\newblock {\em J. Math. Anal. Appl.}, 388(2):942--951, 2012.

\end{thebibliography}

\end{document}